\newcommand{\hyp}[5]{\,\mbox{}_{#1}F_{#2}\!\left(
  \genfrac{}{}{0pt}{}{#3}{#4};#5\right)}
\newcommand{\qhyp}[5]{\,\mbox{}_{#1}\phi_{#2}\!\left(
\genfrac{}{}{0pt}{}{#3}{#4};#5\right)}
\newcommand{\myref}[1]{(\ref{#1})}
\def\cprime{$'$}
\newtheorem{thm}[lemma]{Theorem}
\newtheorem{cor}[lemma]{Corollary}
\newtheorem{prop}[lemma]{Proposition}
\def\eqnarray{\stepcounter{equation}\let\@currentlabel=\theequation
\global\@eqnswtrue
\tabskip\@centering\let\\=\@eqncr
$$\halign to \displaywidth\bgroup\hfil\global\@eqcnt\z@
  $\displaystyle\tabskip\z@{##}$&\global\@eqcnt\@ne
  \hfil$\displaystyle{{}##{}}$\hfil
  &\global\@eqcnt\tw@ $\displaystyle{##}$\hfil
  \tabskip\@centering&\llap{##}\tabskip\z@\cr}
\def\endeqnarray{\@@eqncr\egroup
      \global\advance\c@equation\m@ne$$\global\@ignoretrue}
\def\@yeqncr{\@ifnextchar [{\@xeqncr}{\@xeqncr[5pt]}}
\begin{document}

\renewcommand{\PaperNumber}{***}

\FirstPageHeading

\ShortArticleName{
Generalizations of
generating function
for basic hypergeometric
orthogonal polynomials}

\ArticleName{Generalizations of generating functions 
for basic hypergeometric orthogonal polynomials}

\Author{Howard S.~Cohl\,$^\dag\!\!\ $, Roberto S.~Costas-Santos\,$^\S\!\!\ $,
Philbert R.~Hwang\,$^\ddag{}$,
and Tanay Wakhare\,$^\ast{}$}

\AuthorNameForHeading{H.~S.~Cohl, R. S.~Costas-Santos, 
P. Hwang, T.~Wakhare}
\Address{$^\dag$~Applied and Computational Mathematics Division,
National Institute of Standards and Technology,
Mission Viejo, CA 92694, USA
\URLaddressD{
\href{http://www.nist.gov/itl/math/msg/howard-s-cohl.cfm}
{http://www.nist.gov/itl/math/msg/howard-s-cohl.cfm}
}
} 
\EmailD{howard.cohl@nist.gov} 

\Address{$^\S$~Departamento de F\'isica y Matem\'{a}ticas,
Universidad de Alcal\'{a},
c.p. 28871, Alcal\'{a} de Henares, Spain
} 
\URLaddressD{
\href{http://www.rscosan.com}
{http://www.rscosan.com}
}
\EmailD{rscosa@gmail.com} 
\Address{$^\ddag$~Department of Computer Science, 
University of Maryland, College Park, MD 20742, USA}
\EmailD{hwangphilbert@gmail.com} 
\Address{$^\ast$~Department of Mathematics, University 
of Maryland, College Park, MD 20742, USA}
\EmailD{twakhare@gmail.com} 

\ArticleDates{Received 19 April 2018 in final 
form ????; Published online ????}

\Abstract{We derive generalized generating functions 
for basic hypergeometric orthogonal polynomials by 
applying connection relations with one free parameter 
to them.  In particular, we generalize generating 
functions for the Askey-Wilson, continuous 
$q$-ultraspherical/Rogers,  little $q$-Laguerre/Wall, 
and $q$-Laguerre polynomials.
Depending on what type of orthogonality these 
polynomials satisfy, we derive corresponding definite 
integrals, infinite series, bilateral infinite 
series, and $q$-integrals.
}
\Keywords{
Basic hypergeometric series; Basic hypergeometric 
orthogonal polynomials; Generating functions; Connection 
coefficients; Eigenfunction expansions;  Definite integrals; 
Infinite series, Bilateral infinite series;
$q$-integrals.
}

\Classification{33C45, 05A15, 33C20, 34L10, 30E20}

\section{Introduction}
\label{Introduction}

In the context of generalized hypergeometric 
orthgononal polynomials H. Cohl developed in 
\cite[(2.1)]{CohlGenGegen} therein) a series 
rearrangement technique which produces a 
generalization of the generating function for 
the Gegenbauer polynomials.  We have since  
demonstrated that this technique is valid for 
a larger class of hypergeometric orthogonal 
polynomials. For instance, in \cite{Cohl12pow} 
we applied this same technique to the Jacobi 
polynomials, and in Cohl et al. \cite{CohlMacKVolk}, 
we extended this technique to many generating 
functions for the Jacobi, Gegenbauer, Laguerre, 
and Wilson polynomials.  

The series rearrangement technique combines a 
connection relation with a generating function, 
resulting in a series with multiple sums. The 
order of summations are then rearranged and 
the result often simplifies to produce a
generalized generating function whose coefficients 
are given in terms of generalized or basic 
hypergeometric functions.  This technique is 
especially productive when using connection 
relations with one free parameter, since the 
relation is most often a product of Pochhammer 
and $q$-Pochhammer symbols.

Basic hypergeometric orthogonal polynomials with more 
than one free parameter, such the Askey-Wilson 
polynomials, have multi-parameter connection relations. 
These connection relations are in general given by
single or multiple summation expressions. 
For the Askey-Wilson polynomials, the connection 
relation with four free parameters is given as a basic 
double hypergeometric series. The fact that the 
four free parameter connection coefficient for the 
Askey-Wilson polynomials is given by a double sum was
known to Askey and Wilson as far back as 1985 (see 
\cite[p. 444]{Ismail}). When our series rearrangement 
technique is applied to cases with more
than one free parameter, the resulting coefficients 
of the generalized generating function are rarely 
given in terms of a basic hypergeometric series. 
The more general problem of generalized generating 
functions with more than one free parameter requires 
the theory of multiple basic hypergeometric
series and is not treated in this paper. 

The coefficients of our derived generalized 
generating functions  are basic hypergometric 
functions.  There are many known summation 
formulae for basic hypergeometric functions (see 
for instance, \cite[Sections 17.5--17.7]{NIST:DLMF}). 
One could study the special combinations of 
parameters which allow for summability of our 
basic hypergeometric coefficients.  
However, in these cases the affect of summability 
for special parameter values, simply reduces to 
a re-expression of the original generating 
function used to generate the generalizations 
that we present. So therefore nothing new 
would then be learned by this study.

In this paper, we apply this technique to 
generalize generating functions for basic 
hypergeometric orthogonal polynomials in the 
$q$-analog of the Askey scheme 
\cite[Chapter 14]{Koekoeketal}.
These are the continuous 
$q$-ultraspherical/Rogers polynomials
(Section \ref{Rogerscontinuousqultrasphericalpolynomials}),
little $q$-Laguerre polynomials
(Setion \ref{LittleqLaguerreWallpolynomials}),
$q$-Laguerre polynomials
(Section \ref{qLaguerrepolynomials})
and the Askey-Wilson polynomials (Section 
\ref{AskeyWilsonpolynomials}).
In Section 
\ref{Definiteintegralsinfiniteseriesandqintegrals},
we have also computed new definite integrals, 
infinite series, and Jackson integrals (hereafter 
$q$-integrals) corresponding to our generalized 
generating function expansions using orthogonality 
for the studied basic hypergeometric orthogonal
polynomials.

Note that one important class of hypergeometric 
orthogonal polynomial generating functions 
which does not seem amenable to our series 
rearrangement technique are bilinear generating 
functions. The existence of an extra orthogonal 
polynomial in the generating function, produces 
multiple summation expressions via the introduction 
of connection relations for one or both of the 
polynomials with the sums being formidable to 
evaluate in closed form.

\section{Preliminaries}
Throughout the paper, we will adopt the following 
notation to indicate sequential positive and negative 
elements, in a list of elements, namely
\[
\pm a:=\{a,-a\}.
\]
If $\pm$ appears in an expression, but not in 
a list, it is to be treated as normal.
In order to obtain our derived identities, we 
rely on properties of the$q$-Pochhammer symbol 
($q$-shifted factorial).  The $q$-Pochhammer 
symbolis defined for $n\in\mathbb 
N_0:=\{0, 1, 2,\dots\}$ such that
\begin{equation}
\label{2:1}
(a;q)_0:=1,
\quad
(a;q)_n:=(1-a)(1-aq)\cdots(1-aq^{n-1}),
\end{equation}
and
\begin{equation}
(a;q)_\infty:=\prod_{n=0}^\infty (1-aq^{n}),
\label{2:2}
\end{equation}
where $0<|q|<1$, $a\in\mathbb C$.
We define the $q$-factorial as 
\cite[(1.2.44)]{GaspRah}
\[
[0]_q!:=1, \ [n]_q!:=[1]_q[2]_q\cdots [n]_q, \ n\ge 1,
\]
where the $q$-number is defined as \cite[(1.8.1)]{Koekoeketal}
\[
[z]_q:=\frac{1-q^z}{1-q}, \quad z\in \mathbb C,
\]
with $q\in\mathbb C$, $q\ne 1$.
Note that $[n]_q! = (q;q)_n / (1-q)^n$.

The following properties for the $q$-Pochhammer 
symbol  can be found in Koekoek et al. 
\cite [(1.8.7), (1.8.10-11), (1.8.14), (1.8.19), 
(1.8.21-22)]{Koekoeketal}, namely for appropriate 
values of $a$ and $k\in\mathbb N_0$,
\begin{eqnarray}
\label{2:3}
&&\hspace{-6.5cm}(a^{-1};q)_n=\frac{(-1)^n}{a^n}
q^{\binom n 2}(a;q^{-1})_n,\\[0.2cm] \label{2:4}
&&\hspace{-6.5cm}(a;q)_{n+k} = (a;q)_k(aq^k;q)_n 
= (a;q)_n(aq^n;q)_k,\\[0.2cm] \label{2:5}
&&\hspace{-6.5cm}(aq^n;q)_{k} = \frac{(a;q)_k}
{(a;q)_n}(aq^k;q)_n,\\[0.2cm]
\label{2:6}
&&\hspace{-6.5cm}(aq^{-n};q)_{k}=q^{-nk}
\frac{(q/a;q)_n}{(q^{1-k}/a;q)_n}(a;q)_k,\\[0.2cm]
\label{2:7}
&&\hspace{-6.5cm}(a;q)_{2n}=(a,aq;q^2)_n,\\[0.2cm]
\label{2:8}
&&\hspace{-6.5cm}(a^2;q^2)_n=(\pm a;q)_n.
\end{eqnarray}

Observe that by using (\ref{2:1}) and 
(\ref{2:8}), we get
\begin{eqnarray}
&&\hspace{-6.5cm}(aq^n;q)_n=\frac{(\pm 
\sqrt{a},\pm \sqrt{aq};q)_n}{(a;q)_n}. 
\label{2:9}
\end{eqnarray}


\begin{lemma}
	Let $q, \alpha,\beta\in\mathbb C$, $0<|q|<1$. Then
	\begin{equation}
	\label{2:10}
	\lim_{q\uparrow1^{-}}\frac{(q^\alpha;q)_\beta}
	{(1-q)^\beta}=(\alpha)_\beta.
	\end{equation}
\end{lemma}
\begin{proof}
	Define the $q$-gamma function $\Gamma_q$ by 
	\cite[(1.9.1)]{Koekoeketal}
	\[
	\Gamma_q(x):=\frac{(1-q)^{1-x}(q;q)_\infty}
	{(q^x;q)_\infty},
	\]
	and the arbitrary $q$-Pochhammer symbol by 
	(\ref{2:7}).
	
	Observe that, by using (\ref{2:6}), 
	if $\Re \beta<0$ then
	\begin{equation}
	(a;q)_\beta:=\frac 1{(a q^{\beta};q)_{-\beta}}.
	\label{2:11}
	\end{equation}
	Taking the previous expressions we have 
	that the arbitrary Pochhammer symbol for 
	$\Re \beta>0$ is defined naturally by
	\[
	(\alpha)_\beta:=\frac{\Gamma(\alpha+\beta)}
	{\Gamma(\alpha)},
	\]
	and if $\Re \beta<0$ then
	$(\alpha)_\beta:=1/(\alpha+\beta)_{-\beta}$.
	
	Then
	\begin{itemize}
		\item If $\alpha+\beta\in -\mathbb N_0$ then 
		the result is straightforward by definition
		since $(-n)_n=0$ and $(q^{-n};q)_n=0$ for any 
		$n\in \mathbb N_0$.
		\item If $\Re \beta>0$ then
		\[
		\lim_{q\uparrow1^{-}}
		\frac{(q^\alpha;q)_\beta}{(1-q)^\beta}
		=\lim_{q\uparrow1^{-}}
		\frac{(q^\alpha;q)_\infty}{(1-q)^\beta(q^{\alpha
				+\beta};q)_\infty}=\lim_{q\uparrow1^{-}}
		\frac{\Gamma_q(\alpha+\beta)}{\Gamma_q(\alpha)}
		=(\alpha)_\beta,
		\]
		since \cite[Section 1.9]{Koekoeketal} $\lim_{q\uparrow1^{-}}\Gamma_q(x)=\Gamma(x)$.
		\item If $\Re \beta<0$ then 
		\[
		\lim_{q\uparrow1^{-}}
		\frac{(q^\alpha;q)_\beta}{(1-q)^\beta}
		\stackrel{(\ref{2:11})}=\lim_{q\uparrow1^{-}}
		\frac{(1-q)^{-\beta}}
		{(q^{\alpha+\beta};q)_{-\beta}}
		=\lim_{q\uparrow1^{-}}\frac{(1-q)^{-\beta}
			(q^\alpha;q)_\infty}{(q^{\alpha+\beta}
			;q)_\infty}=\lim_{q\uparrow1^{-}}\frac
		{\Gamma_q(\alpha+\beta)}{\Gamma_q(\alpha)}
		=(\alpha)_\beta.
		\]
	\end{itemize}
\end{proof}


We also take advantage of the $q$-binomial 
theorem \cite[(1.11.1)]{Koekoeketal}
\[
\qhyp10a-{q,z}=\frac{(az;q)_\infty}{(z;q)_\infty},
\qquad |z|<1,
\]
where we have used \myref{2:2}.
The basic hypergeometric series, which we 
will often use, is defined as
\cite[(1.10.1)]{Koekoeketal}
\begin{equation}
{}_r\phi_s\left(
\begin{array}{c}
a_1,\ldots,a_r\\
b_1,\ldots,b_s
\end{array}
;q,z
\right):=\sum_{k=0}^\infty
\frac{(a_1,\ldots,a_r;q)_k}
{(q,b_1,\ldots,b_s;q)_k}
\left((-1)^kq^{\binom k2}\right)^{1+s-r}
z^k.
\label{2:12}
\end{equation}

Let us prove some inequalities that we will 
later use.
\begin{lemma} \label{lem:4}
Let $j\in \mathbb N$, $k,n\in \mathbb N_0$, 
$z\in \mathbb C$, $\Re u>0$, $v\ge 0$, 
and $|q|<1$. Then
\begin{eqnarray}
\label{2:13} \dfrac{(q^u;q)_j}{(1-q)^j}&\ge& 
[\Re u]_q [j-1]_q!, \\ \label{2:14} 
\dfrac{(q^u;q)_n}{(q;q)_n}&\le&  [1+n]_q^ u, \\
\label{2:15} \dfrac{(q^{v+k};q)_n}
{(q^{u+k};q)_n}&\le& \frac{[n+1]_
q^{v+1}}{[\Re (u)]_q},\\ \label{2:16} \dfrac{(q^{z+k};q)_{n-k}}{(1-q)^{n-k}}&\le&  
\frac{[n]_q!}{[k]_q!} [1+n]_q^{|z|}.
\end{eqnarray}
\end{lemma}
\begin{proof}
If $|q|<1$ then
\[
\dfrac{(q^u;q)_j}{(1-q)^j}=\prod_{k=1}^{j-1} 
\frac{1-q^{u+k-1}}{1-q}\ge \frac{1-q^u}{1-q}
\prod_{k=1}^{j-1} \frac{1-q^{k}}{1-q}\ge 
[\Re(u)]_q [j-1]_q!.
\]
This completes the proof of \eqref{2:13}. 
Choose $m\in \mathbb N_0$ such that $m\le
u \le m + 1$.  Then $q^{m+1}\le q^u$, so
\[
\dfrac{(q^u;q)_n}{(q;q)_n}=\prod_{k=0}^{n-1} 
\frac{1-q^{u+k}}{1-q^{1+k}}
\le \prod_{k=1}^n \frac{1-q^{m+k}}{1-q^{k}}
=\prod_{k=1}^m \frac{1-q^{n+k}}{1-q^{k}}
\le [n+1]_q^m\le [n+1]_q^{u}.
\]
This completes the proof of \eqref{2:14}. Without 
loss of generality we assume $u>0$. If $v\le  u$ 
then the inequality is clear, so let us assume
that $0<u<v$. Since $|q|<1$ and for $t\ge 0$,
\[
\frac{t+v}{t+u}\le \frac v u,
\]
we have
\[
\dfrac{(q^{v+k};q)_n}{(q^{u+k};q)_n}\le
\frac{(q^v)_n}{(q^u)_n}\le
\frac 1{[u]_q}\frac{(q^v)_n}{[n-1]_q!(1-q)^n}.
\]
Choose $m\in \mathbb N$ so that $m-1<v\le m$. 
Then
\[
\dfrac{(q^{v+k};q)_n}{(q^{u+k};q)_n}\le
\frac 1{[u]_q}\frac{[n]_q(q^m;q)_n}{(q;q)_n}=
\frac 1{[u]_q}\frac{[n]_q(q^n;q)_{m-1}}
{(q;q)_{m-1}}\le\frac 1{[u]_q}[n]_q 
[n+1]_q^{m-1}\le\frac 1{[u]_q} [n+1]_q^{v+1}.
\]
This completes the proof of \eqref{2:15}. 
Finally if $|z|\le 1$, then
\[
\dfrac{(q^{z+k};q)_{n-k}}{(1-q)^{n-k}}
=\prod_{\lambda=1}^{n-k}\frac{1-q^{z+k-1+
\lambda}}{1-q}\le\prod_{\lambda=1}^{n-k}
\frac{1-q^{k+\lambda}}{1-q}=\frac{[n]_q!}
{[k]_q!}.
\]
If $|z|>1$ then using \eqref{2:14}
\[
[k]_q!\dfrac{(q^{z+k};q)_{n-k}}{(1-q)^{n-k}}
\le [|z|]_q [|z|+1]_q\cdots [|z|+n-1]_q\le
[n]_q! [1+n]_q^{|z|}.
\]
Therefore all the previous formulae hold true.
\end{proof}

\section{Continuous $q$-ultraspherical/Rogers polynomials}
\label{Rogerscontinuousqultrasphericalpolynomials}

The continuous $q$-ultraspherical/Rogers 
polynomials are defined as 
\cite[(14.10.17)]{Koekoeketal}
\[
C_n(x;\beta|q):=\frac{(\beta;q)_n}
{(q;q)_n}e^{in\theta}\,{}_2\phi_1\left(
\begin{array}{c} q^{-n},\beta\\[0.2cm]
\beta^{-1}q^{1-n}
\end{array};q,q\beta^{-1}e^{-2i\theta}
\right), \qquad x=\cos \theta.
\]
By starting with generating functions for the 
continuous $q$-ultraspherical/Rogers polynomials \cite[(14.10.27--33)]{Koekoeketal}, we derive 
generalizations using the connection relation 
for these polynomials, namely
\cite[(13.3.1)]{Ismail}
\begin{equation}
\label{3:17}
C_{n}(x;\beta\,|\,q) =\frac{1}{1-\gamma}
\sum_{k=0}^{\lfloor n/2 \rfloor}
\frac{(1-\gamma q^{n-2k}) \gamma^{k}(\beta
\gamma^{-1};q)_{k}(\beta;q)_{n-k}}{(q;q)_{k}
(q\gamma;q)_{n-k}}\, C_{n-2k}(x;\gamma|q).
\end{equation}

\begin{thm} \label{theo:4}
Let $x\in[-1,1]$, $|t|<1-\beta^2$, $\beta,
\gamma\in (-1,1)\setminus\{0\},$ $|q|<1$. 
Then
\begin{eqnarray}
&&(te^{-i\theta};q)_\infty\,{}_2\phi_1
\left(\begin{array}{c}
\beta,\beta e^{2i\theta}\\ \beta^2
\end{array};q,te^{-i\theta}\right)
=\sum_{n=0}^\infty\frac{(\beta;q)_n\,q^{\binom n2}
(-\beta t)^n}{(\gamma,\beta^2;q)_n}C_n(x;\gamma|q)
\nonumber\\ &&\hspace{2cm}\times\, {}_2\phi_5\left(
\begin{array}{c} \beta\gamma^{-1},\beta q^n\\
\gamma q^{n+1},\pm \beta q^{n/2}, \pm \beta q^{(n+1)/2}
\end{array};q,\gamma(\beta t)^2 q^{2n+1}\right).
\label{gengenthm2}
\end{eqnarray}
\end{thm}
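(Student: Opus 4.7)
The proof follows the same template as the proof of Theorem \ref{theo:3}. My starting point would be the member of \cite[(14.10.27--33)]{Koekoeketal} whose left-hand side matches \myref{gengenthm2}, namely
$$
(te^{-i\theta};q)_\infty\,\qhyp21{\beta,\beta e^{2i\theta}}{\beta^2}{q,te^{-i\theta}}
=\sum_{n=0}^\infty \frac{(-\beta t)^n q^{{n\choose 2}}}{(\beta^2;q)_n}\,C_n(x;\beta|q).
$$
A quick sanity check: setting $\gamma=\beta$ in \myref{gengenthm2} collapses the inner ${}_2\phi_5$ to $1$, and the identity $(\beta;q)_n(1-\beta q^n)/[(1-\beta)(q\beta;q)_n]=1$ recovers exactly this expansion. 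Into this generating function I would substitute the connection relation \myref{ConCoef} for $C_n(x;\beta|q)$, shift the outer index by $n\mapsto n+2k$, and reverse the order of summation, yielding an outer sum on $n\in\mathbb N_0$ and an inner sum on $k\in\mathbb N_0$.

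To legitimise the interchange of summations I would adapt the absolute-convergence argument used in Theorem \ref{theo:3}: Lemma \ref{lem:1} controls every $q$-Pochhammer ratio appearing in the coefficient $a_n=(-\beta t)^nq^{{n\choose 2}}/(\beta^2;q)_n$ and in the connection coefficient $c_{k,n}$ of \myref{ConCoef}, while \myref{Cneq} furnishes a polynomial-in-$n$ bound on $|C_n(x;\gamma|q)|$. Combined with the hypothesis $|t|<1-\beta^2$, these estimates produce a geometric majorant of the form $K\sum_n |t|^n(1+n)^\sigma/(1-\beta^2)^n<\infty$ that dominates the double series.

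After the interchange I would split each Pochhammer symbol that straddles the shift via \myref{qPochAdd}, namely $(\beta;q)_{n+k}=(\beta;q)_n(\beta q^n;q)_k$, $(q\gamma;q)_{n+k}=(q\gamma;q)_n(\gamma q^{n+1};q)_k$, and $(\beta^2;q)_{n+2k}=(\beta^2;q)_n(\beta^2 q^n;q)_{2k}$, together with the elementary factorisations $(-\beta t)^{n+2k}=(-\beta t)^n(\beta t)^{2k}$ and $q^{{n+2k\choose 2}}=q^{{n\choose 2}}q^{2kn}q^{k(2k-1)}$. The $n$-dependent factors then collect into precisely the prefactor multiplying $C_n(x;\gamma|q)$ displayed in \myref{gengenthm2}.

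The heart of the computation is identifying the remaining $k$-sum with the claimed ${}_2\phi_5$. Its four lower parameters $\pm\beta q^{n/2}$ and $\pm\beta q^{(n+1)/2}$ arise from a single quadratic manipulation of $(\beta^2 q^n;q)_{2k}$: first \myref{qPoch2n} gives $(\beta^2 q^n;q)_{2k}=(\beta^2 q^n,\beta^2 q^{n+1};q^2)_k$, and then \myref{qPochSquared} splits each $(a^2;q^2)_k$ as $(a,-a;q)_k$, producing exactly the four required linear $q$-Pochhammers. A final bookkeeping step checks that the stray factor $q^{2kn+k(2k-1)}$ combines with the nominal argument $(\gamma(\beta t)^2q^{2n+1})^k$ to yield the $((-1)^kq^{{k\choose 2}})^{1+5-2}=q^{2k(k-1)}$ prefactor demanded by \myref{bhs} for $(r,s)=(2,5)$. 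This quadratic splitting together with the matching of $q$-powers is the main technical obstacle; everything else is a direct adaptation of the bookkeeping in the proof of Theorem \ref{theo:3}.
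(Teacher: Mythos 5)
Your proposal is correct and takes essentially the same route as the paper's proof: it starts from the same generating function \myref{genfun141029}, inserts the connection relation \myref{ConCoef}, shifts $n\mapsto n+2k$ (your identity $q^{{n+2k\choose 2}}=q^{{n\choose 2}}q^{2kn}q^{k(2k-1)}$ is exactly the paper's ${n+2k\choose 2}={n\choose 2}+4{k\choose 2}+(2n+1)k$), and justifies the interchange with the same geometric majorant $|t|^n(n+1)^{\sigma}/(1-\beta^2)^n$. Your explicit quadratic splitting of $(\beta^2 q^n;q)_{2k}$ via \myref{qPoch2n} and \myref{qPochSquared}, and the check that the factor $((-1)^kq^{{k\choose 2}})^{1+5-2}=q^{2k(k-1)}$ absorbs the stray $q$-powers, correctly spells out what the paper compresses into ``using \myref{qPochAdd} through \myref{bhs}.''
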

\begin{proof}
A generating function for continuous 
$q$-ultraspherical/Rogers polynomials
can be found in Koekoek et al. 
\cite[(14.10.29)]{Koekoeketal}
\begin{equation}
(te^{-i\theta};q)_\infty\,{}_2\phi_1
\left(\begin{array}{c} \beta,\beta 
e^{2i\theta}\\ \beta^2\end{array}
;q,te^{-i\theta}\right)=\sum_{n=0}^\infty
\frac{q^{\binom n2}(-\beta t)^n}{(\beta^2;q)_n}
C_n(x;\beta|q). \label{genfun141029}
\end{equation}
Start with (\ref{genfun141029}),
inserting (\ref{3:17}), shifting the 
$n$ index by $2k$, reversing the order of 
summation and using \myref{2:4} 
through \myref{2:12}, and by noting
\[
{\binom {n+2k} 2}={\binom n2}
+4{\binom k 2}+(2n+1)k.
\]
This completes the proof since $|a_n|\le 
(|t|/(1-\beta^2))^n$, 
$|c_{n,k}|\le K_6 [n-k+1]^{\sigma_3}$, and 
$|C_n(x;\beta|q)| \le [n+1]^{\sigma_4}/(1-n
+\log_q \beta)$ and therefore
\[
\sum_{n=0}^\infty |a_n|\sum_{k=0}^{\lfloor 
n/2 \rfloor} |c_{k,n}||C_k(x;\beta|q)|\le K_7 
\sum_{n=0}^\infty \frac{|t|^n}{(1-\beta^2)^n}
(n+1)^{\sigma_5}<\infty.
\]
Therefore the theorem holds.
\end{proof}

\begin{cor} Let $x\in[-1,1]$, $|t|<1$, $\beta,\gamma\in(-1,\infty)\setminus\{0,1\}$, $|q|<1$. Then 
\begin{equation}
e^{xt}\hyp01{-}{\beta+\frac12}{\frac{(x^2-1)t^2}{4}}=
\sum_{n=0}^\infty \frac{(\beta)_nt^n}{(\gamma,2\beta)_n}C_n^\gamma(x)
\hyp23{\beta-\gamma,\beta+n}{\gamma+n+1,\beta+\frac{n}{2},\beta+\frac{n+1}{2}}{\frac{t^2}{4}}.
\label{qto1genofgenthm2}
\end{equation}
\label{cor4}
\end{cor}
\begin{proof}
In (\ref{gengenthm2}), transform $\beta\mapsto q^\beta$, $\gamma\mapsto q^\gamma$, 
$t\mapsto (1-q)t$, and take the limit as $q\uparrow 1^{-}$.
Using the definition of the $q$-exponential function \cite[(1.14.2)]{Koekoeketal} 
$E_q(z):=(-z;q)_\infty$, $\lim_{q\uparrow 1^{-}}E_q((1-q)z)=e^z$, 
and that the ${}_2\phi_1$ becomes a Kummer confluent hypergeometric functions ${}_1F_1$ 
with argument $-2it\sin\theta$. Representing this as a Bessel function of the first
kind using \cite[(10.16.5)]{NIST:DLMF}, and then using \cite[(10.2.2)]{NIST:DLMF},
the left-hand side follows. The $q\uparrow 1^{-}$ limit on the right-hand side is straightforward.
\end{proof}

\begin{thm} \label{theo:5}
Let $x\in[-1,1],$ $|t|<1-\beta^2$,  $\beta,
\gamma\in (-1,1)\setminus\{0\}$, $|q|<1$.
Then
{\small \begin{eqnarray}
\frac1
{(te^{i\theta};q)_\infty}{}_2\phi_1\!\!\left(
\!\!
\begin{array}{c}\beta,\beta e^{2i\theta}\\
\beta^2\end{array}\!\!;q,te^{-i\theta}\right)
\!=\!\sum_{n=0}^\infty\!\frac{(\beta;q)_n
t^n}{(\gamma,\beta^2;q)_n}C_n(x;\gamma|q)
\,{}_6\phi_5\!\left(\!\!
\begin{array}{c}
\beta\gamma^{-1},\beta q^n,0,0,0,0\\
\gamma q^{n+1}, \pm \beta q^{\frac n2},
\pm \beta q^{\frac {n+1}2}\end{array}
\!\!;q,\gamma t^2
\right).\label{theorem3}
\end{eqnarray}}
\end{thm}
\begin{proof}A generating function for the continuous $q$-ultraspherical/Rogers
polynomials can be found in Koekoek et al. \cite[(14.10.28)]{Koekoeketal}
\begin{equation}
\frac1{(te^{i\theta};q)_\infty}\,{}_2\phi_1
\left(
\begin{array}{c}
\beta,\beta e^{2i\theta}\\ \beta^2
\end{array};q,te^{-i\theta}\right)
=\sum_{n=0}^\infty\frac{C_n(x;\beta|q)}{(\beta^2;q)_n}t^n.
\label{genfun141028}
\end{equation}
The proof follows as above by starting with 
(\ref{genfun141028}), inserting (\ref{3:17}), 
shifting the $n$ index by $2k$, reversing
the order of summation and
using \myref{2:4} through \myref{2:12}.
\end{proof}

\begin{remark}
The $q\uparrow 1^{-1}$ limit of (\ref{theorem3}) can also be shown to be the same as 
(\ref{qto1genofgenthm2}), by using the transformation $x\mapsto -x$. 
The proof of this is the same as the proof of Corollary \ref{cor4},
except instead use the definition of the $q$-exponential function \cite[(1.14.1)]{Koekoeketal} 
$e_q(z):=1/(z;q)_\infty$, $\lim_{q\uparrow 1^{-}}e_q((1-q)z)=e^z$. Of course, the same 
is true for the $q\uparrow 1^{-}$ limits of the original generating functions 
\cite[(14.10.28--29)]{Koekoeketal}, which both are analgoues of \cite[(9.8.31)]{Koekoeketal},
and are equivalent under the transformation $x\mapsto -x$.
\end{remark}

\begin{thm} \label{theo:6}
Let $x\in[-1,1],$ $|t|<1-\beta^2$, $\gamma\in\mathbb C$, $\alpha,\beta\in (-1,1)\setminus\{0\}$,
$|q|<1$. Then
\begin{eqnarray}
\label{genFunc4C}
&&\hspace{-1cm}\frac{(\gamma te^{i\theta};q)_\infty}
{(te^{i\theta};q)_\infty}
\,{}_3\phi_2
\left(
\begin{array}{c}
\gamma,\beta,\beta e^{2i\theta}\\
\beta^2,\gamma te^{i\theta}
\end{array}
;
q,te^{-i\theta}
\right)
=
\sum_{n=0}^\infty
\frac{(\beta,\gamma;q)_n
t^n}
{(\alpha,\beta^2;q)_n}C_n(x;\alpha|q)
\nonumber\\ &&\hspace{5cm}\times\,{}_6\phi_5
\left(
\begin{array}{c}
\beta/\alpha,\beta q^n,
\pm (\gamma q^n)^{1/2},
\pm (\gamma q^{n+1})^{1/2}
\\
\alpha q^{n+1}, 
\pm \beta q^{n/2},
\pm \beta q^{(n+1)/2}
\end{array}
;
q,\alpha t^2
\right).
\end{eqnarray}
\end{thm}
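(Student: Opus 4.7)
The plan is to follow the same series-rearrangement template used in the proofs of Theorems~\ref{theo:3}--\ref{theo:5}. First I would identify the appropriate generating function for Rogers/continuous $q$-ultraspherical polynomials from Koekoek \emph{et al.}~(2010) \cite[Section 14.10]{Koekoeketal}, namely the one that expresses the left-hand side of (\ref{genFunc4C}), with polynomial parameter $\alpha$ temporarily replaced by $\beta$, as
$$
\sum_{n=0}^\infty \frac{(\gamma;q)_n}{(\beta^2;q)_n}\,C_n(x;\beta|q)\,t^n.
$$
This is the natural ``third companion'' to the generating functions \cite[(14.10.28--29)]{Koekoeketal} used in Theorems~\ref{theo:4} and \ref{theo:5}, and degenerates to them in the limits $\gamma\to 0$ and $\gamma\to\infty$ (after rescaling $t$).

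Into this series I would then substitute the connection relation (\ref{ConCoef}), with its second parameter relabelled from $\gamma$ to $\alpha$. This yields a double series in $(n,k)$ indexing $C_{n-2k}(x;\alpha|q)$; shifting the outer index $n\mapsto n+2k$ and reversing the order of summation is the standard manoeuvre. After the swap, the inner $k$-sum collects the factors $(\gamma;q)_{n+2k}$, $(\beta;q)_{n+k}$, $(\beta^2;q)_{n+2k}$, $(q\alpha;q)_{n+k}$, together with $\alpha^k t^{2k}$ and $(\beta/\alpha;q)_k/(q;q)_k$, and the task is to recognise it as the ${}_6\phi_5$ on the right-hand side of (\ref{genFunc4C}).

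The principal obstacle is the algebraic bookkeeping that processes the two Pochhammer symbols whose length $n+2k$ grows twice as fast as $k$. For $(\gamma;q)_{n+2k}$, the recipe is: split via (\ref{qPochAdd}) as $(\gamma;q)_n(\gamma q^n;q)_{2k}$; apply (\ref{qPoch2n}) to obtain $(\gamma q^n,\gamma q^{n+1};q^2)_k$; and apply (\ref{qPochSquared}) twice, with $a^2=\gamma q^n$ and $a^2=\gamma q^{n+1}$, to produce the four factors $\bigl((\gamma q^n)^{1/2},-(\gamma q^n)^{1/2},(\gamma q^{n+1})^{1/2},-(\gamma q^{n+1})^{1/2};q\bigr)_k$, which become the new upper parameters of the ${}_6\phi_5$. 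The identical chain applied to $(\beta^2;q)_{n+2k}$, now with $a=\beta q^{n/2}$ and $a=\beta q^{(n+1)/2}$, supplies the four lower parameters $\pm\beta q^{n/2},\pm\beta q^{(n+1)/2}$. The simple splittings of $(\beta;q)_{n+k}$ and $(q\alpha;q)_{n+k}$ by (\ref{qPochAdd}) provide the remaining upper parameter $\beta q^n$ and lower parameter $\alpha q^{n+1}$. Since the resulting ${}_6\phi_5$ satisfies $1+s-r=0$, the definition (\ref{bhs}) demands no extra $\bigl((-1)^k q^{\binom{k}{2}}\bigr)$ factor, so the argument of the series is exactly $\alpha t^2$.

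Finally, the interchange of summations would be justified by the same absolute-convergence argument as in the preceding theorems. The new coefficient $(\gamma;q)_n/(\beta^2;q)_n$ of $t^n$ in the starting generating function is controlled by the inequalities (\ref{4:2})--(\ref{4:3}) of Lemma~\ref{lem:1}, which furnish a bound of the form $|a_n|\le K(n+1)^{\sigma}$; combined with the polynomial-in-$n$ estimates on $|c_{k,n}|$ and $|C_n(x;\alpha|q)|$ used in Theorems~\ref{theo:3}--\ref{theo:5}, the full double sum is dominated by a series of the form $\sum_{n\ge 0}|t|^n(1-\beta^2)^{-n}(n+1)^{\sigma'}$, which converges for $|t|<1-\beta^2$. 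Fubini therefore applies and the series rearrangement is legitimate, completing the proof.
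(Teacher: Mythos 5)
Your proposal follows the paper's proof essentially verbatim: the paper likewise starts from the generating function \cite[(14.10.33)]{Koekoeketal} (its equation (\ref{genfun4})), substitutes the connection relation (\ref{ConCoef}) with the parameter relabelled $\alpha$, shifts the $n$ index by $2k$, reverses the order of summation, and invokes (\ref{qPochAdd}) through (\ref{bhs}). Your explicit Pochhammer bookkeeping (including the correct $(\gamma;q)_{n+2k}$ and $(\beta^2;q)_{n+2k}$ splittings and the $1+s-r=0$ check) and the dominated-convergence bound for $|t|<1-\beta^2$ merely spell out steps the paper leaves implicit.
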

\begin{proof}A generating function for 
the continuous $q$-ultraspherical/Rogers 
polynomials can be found in Koekoek et al. \cite[(14.10.33)]{Koekoeketal}
\begin{equation}
\frac{(\gamma te^{i\theta};q)_\infty}
{(te^{i\theta};q)_\infty}
\,{}_3\phi_2
\left(
\begin{array}{c}
\gamma,\beta,\beta e^{2i\theta}\\
\beta^2,\gamma te^{i\theta}
\end{array}
;
q,te^{-i\theta}
\right)
=\sum_{n=0}^{\infty}\frac{(\gamma;q)_n}
{(\beta^2;q)_n}C_n(x;\beta|q)t^n,
\label{genfun4}
\end{equation}
where $\gamma\in\mathbb C$.
Substitute \myref{3:17} into the
generating function (\ref{genfun4}),
reverse the order of summation as above, 
shift the $n$ index by $2k$, using \myref{2:4} 
through \myref{2:12}, completes
the proof.  \end{proof}
\begin{thm} \label{theo:7}
Let $x\in[-1,1],$ $|t|<\min\{(1-\beta^2) (1+\sqrt{q}|\beta|)(1-q|\gamma|),1\}$,
$\beta,\gamma\in (-1,1)\setminus\{0\},$ $|q|<1$.
Then
\begin{eqnarray}
\label{genFunthm5}
&&\hspace{-0.6cm}{}_2\phi_1\left(
\begin{array}{c}\pm \beta^{1/2}e^{i\theta}\\
-\beta\end{array}; q,{te^{-i\theta}}\right)
\,{}_2\phi_1 \left(\begin{array}{c}
\pm (q\beta)^{1/2}e^{-i\theta}
\\-q\beta\end{array}; q,{te^{i\theta}}
\right)=\sum_{n=0}^{\infty}
\frac{(\beta, \pm \beta q^{1/2};q)_nt^n}
{(\gamma,\beta^2,-q\beta;q)_n} 
C_n(x;\gamma|q)\nonumber\\
&&\hspace{-0.6cm}\times\,{}_{10}\phi_9\Bigg(
\begin{array}{c}
\beta\gamma^{-1},\beta q^n,
\pm (\beta q^{n+1/2})^{1/2},
\pm (\beta q^{n+3/2})^{1/2},
\pm i(\beta q^{n+1/2})^{1/2},
\pm i(\beta q^{n+{3/2}})^{1/2}
\\
\gamma q^{n+1},
\pm \beta q^{n/2},
\pm \beta q^{(n+1)/2},
\pm i(\beta q^{n+1})^{1/2},
\pm i(\beta q^{n+2})^{1/2}
\end{array}
; q,\gamma t^2
\Bigg).
\end{eqnarray}
\end{thm}
\begin{proof}A generating function for the 
continuous $q$-ultraspherical/Rogers polynomials 
can be found in Koekoek et al.
 \cite[(14.10.31)]{Koekoeketal}
\begin{eqnarray}
&&\hspace{-0.6cm}{}_2\phi_1
\left(
\begin{array}{c}
\pm \beta^{1/2}{e^{i\theta}}
\\
-\beta
\end{array}
;
q,{te^{-i\theta}}
\right)
\,{}_2\phi_1
\left(
\begin{array}{c}
\pm (q\beta)^{1/2}{e^{-i\theta}}
\\
-q\beta
\end{array}
;
q,{te^{i\theta}}
\right)
=\sum_{n=0}^{\infty}\frac{
(
\pm \beta q^{1/2}
;q)_n
}
{(\beta^2,-q\beta;q)_n}C_n(x;\beta|q)\,t^n.
\label{genfun2}
\end{eqnarray}
We substitute \myref{3:17} into the 
generating function (\ref{genfun2}),
switch the order of the summation, shift 
the $n$ index by $2k$ and using
\myref{2:4} through \myref{2:12},
produces 
\[
|a_n|\le \frac{|t|^n}{(1-\beta^2)^n 
(1+q|\beta|)^n(1-q|\gamma|)^n}.
\]
Therefore the theorem holds.
\end{proof}

\begin{thm} \label{theo:8}
Let $x\in[-1,1],$ $|t|<\min\{(1-\beta^2)
(1+\sqrt{q}|\beta|)(1-q|\gamma|),1\}$,
$\beta,\gamma\in (-1,1)\setminus\{0\}$, 
$|q|<1$. Then
\begin{eqnarray}
\label{genFunc1C}
&&{}_2\phi_1
\left(\begin{array}{c}
\beta^{1/2}{e^{i\theta}},(q\beta)^{1/2}
e^{i\theta}\\\beta q^{{1/2}}\end{array}
;q,{te^{-i\theta}}\right) \,{}_2\phi_1
\left(\begin{array}{c} -\beta^{1/2}
{e^{-i\theta}},-(q\beta)^{1/2}
{e^{-i\theta}}\\\beta q^{1/2}\end{array}
;q,{te^{i\theta}}\right)\nonumber\\
\nonumber &&=
\sum_{n=0}^{\infty}
\frac{(
\pm\beta,
-\beta q^{1/2};q)_n
t^n}
{(\gamma,\beta^2,\beta q^{1/2};q)_n} C_n(x;\gamma|q)\\
\nonumber
&&\times\,{}_{10}\phi_9\Bigg(
\begin{array}{c}
\beta\gamma^{-1},\beta q^n,
\pm i(\beta q^n)^{1/2},
\pm i(\beta q^{n+1})^{1/2},
\pm i(\beta q^{n+{1/2}})^{1/2},
\pm i(\beta q^{n+{3/2}})^{1/2}
\\
\gamma q^{n+1},
\pm \beta q^{n/2},
\pm \beta q^{(n+1)/2},
\pm (\beta q^{n+{1/2}})^{1/2}, 
\pm (\beta q^{n+{3/2}})^{1/2}
\end{array}
;
q,\gamma t^2
\Bigg).
\end{eqnarray}
\end{thm}
\begin{proof} We start with the generating 
function for the continuous $q$-ultraspherical/Rogers
polynomials Koekoek {\it et al.}~(2010) 
\cite[(14.10.30)]{Koekoeketal}
\begin{eqnarray}
\label{genFunc1}
{}_2\phi_1
\left(
\begin{array}{c} \beta^{1/2}e^{i\theta}
,(q\beta)^{1/2}e^{i\theta}\\ \beta q^{1/2}
\end{array};q,te^{-i\theta}\right)
\,{}_2\phi_1\left(\begin{array}{c}-\beta^{1/2}
e^{-i\theta},-(q\beta)^{1/2}e^{-i\theta}\\
\beta q^{1/2}\end{array};q,te^{i\theta}
\right)\nonumber\\
=\sum_{n=0}^{\infty}\frac{(-\beta,-\beta 
q^{1/2};q)_n}{(\beta^2,\beta
q^{1/2};q)_n} C_n(x;\beta|q)t^n.
\end{eqnarray}
Using the connection relation (\ref{3:17}) 
in (\ref{genFunc1}), reversing the orders 
of the summation, shifting the $n$ index by 
$2k$, and using \myref{2:4} through \myref{2:12},
obtains the result 
\[
|a_n|\le \frac{|t|^n}{(1-\beta^2)^n 
(1+\sqrt{q}|\beta|)^n(1-q|\gamma|)^n}.
\]
Therefore the theorem holds.
\end{proof}

\begin{thm} \label{theo:9}
Let $x\in[-1,1],$ $|t|<\min\{(1-\beta^2)
(1+\sqrt{q}|\beta|),1\}$, $\beta,\gamma
\in (-1,1)\setminus\{0\},$ $|q|<1$. Then
\begin{eqnarray}
\label{genFunc3C}
&&{}_2\phi_1\left(
\begin{array}{c}\beta^{1/2}e^{i\theta}
,-(q\beta)^{1/2}e^{i\theta}\\
-\beta q^{1/2}\end{array};
q,{te^{-i\theta}}\right)
\,{}_2\phi_1\left(\begin{array}{c}
(q\beta)^{1/2}{e^{-i\theta}}
,-\beta^{1/2}{e^{-i\theta}}\\
-\beta q^{1/2}\end{array};
q,{te^{i\theta}}\right)\nonumber\\
&&=\sum_{n=0}^{\infty}
\frac{(\pm \beta, \beta q^{1/2};q)_n
t^n}{(\gamma,\beta^2,-\beta q^{1/2};q)_n} 
C_n(x;\gamma|q)\nonumber\\
&&\times\,{}_{10}
\phi_9\Bigg(\begin{array}{c}
\beta\gamma^{-1},\beta q^n,
\pm i(\beta q^n)^{1/2},
\pm i(\beta q^{n+1})^{1/2},
\pm (\beta q^{n+1/2})^{1/2},
\pm (\beta q^{n+3/2})^{1/2}
\\
\gamma q^{n+1},
\pm \beta q^{n/2},
\pm \beta q^{(n+1)/2},
\pm i(\beta q^{n+1/2})^{1/2},
\pm i(\beta q^{n+3/2})^{1/2}
\end{array};q,\gamma t^2\Bigg).
\end{eqnarray}
\end{thm}
\begin{proof}A generating function for 
the continuous $q$-ultraspherical/Rogers 
polynomials can be found in Koekoek et al. \cite[(14.10.32)]{Koekoeketal}
\begin{eqnarray}
&&\hspace{-1cm}{}_2\phi_1
\left(\begin{array}{c}
-\beta,(\beta q)^{1/2}\\
-\beta q^{1/2}\end{array}
;q,{te^{-i\theta}}
\right)\,{}_2\phi_1
\left(\begin{array}{c}
(\beta q)^{1/2}{e^{-i\theta}},
-\beta^{1/2}{e^{-i\theta}}\\
-\beta q^{1/2}\end{array};
q,{te^{i\theta}}\right)\nonumber\\
&&\hspace{6cm}=\sum_{n=0}^{\infty}
\frac{(-\beta,\beta q^{1/2};q)_n}
{(\beta^2,-\beta q^{1/2};q)_n}
C_n(x;\beta|q)\,t^n.
\label{genfun3}
\end{eqnarray}
Similar to the proof of
\myref{genFunc1C}, we substitute 
\myref{3:17} into the generating
function
(\ref{genfun3}),
switch the order of the summation, 
shift the $n$ sum by $2k$, and use 
\myref{2:4} through \myref{2:12},
obtaining the result 
\[
|a_n|\le \frac{|t|^n}{(1-\beta^2)^n 
(1+\sqrt{q}|\beta|)^n}.
\]
Therefore the theorem holds.
\end{proof}

\section{Little $q$-Laguerre/Wall polynomials}
\label{LittleqLaguerreWallpolynomials}

The little $q$-Laguerre/Wall polynomials are defined as
\cite[(14.20.1)]{Koekoeketal}
\begin{eqnarray*}
p_n(x;a|q)\,&&:={}_2\phi_1\left(
\begin{array}{c}q^{-n},0\\ aq \end{array}
;q,qx\right)
=\frac1{(a^{-1}
q^{-n};q)_n}\, {}_2\phi_0\left(
\begin{array}{c}q^{-n},x^{-1}\\ -
\end{array}; q,\frac xa \right).
\end{eqnarray*}
The connection relation for little 
$q$-Laguerre/Wall polynomials can be
obtained by Exercise 1.33 in 
\cite{GaspRah} and using the specialization
formula which connects the little 
$q$-Laguerre/Wall polynomials with the 
little $q$-Jacobi polynomials, namely 
\cite[p. 521]{Koekoeketal}
$p_n(x;a|q)=p_n(x;a,0|q)$.
\begin{thm} \label{theo:10}
Let $a, b\in (0,q^{-1})$, $|q|<1$. Then 
the connection relation for the little 
$q$-Laguerre/Wall polynomials is given by
\begin{equation}
\label{ConCoef3}
p_n(x;a|q)=\frac{q^{-{\binom n2}}}{(q a;q)_n}
\sum_{j=0}^n \frac{q^{{\binom j2}+n(n-j)}
(-a)^{n-j}(q^{n-j+1},q b;q)_j
(b q^{1+j-n}/a;q)_{n-j}}{(q;q)_j}\,p_j(x;b |q).
\end{equation}
\end{thm}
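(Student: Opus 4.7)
The plan is to verify the connection relation by expanding both sides as polynomials in $x$ and matching coefficients. First, using the ${}_2\phi_1$ representation
\[
p_n(x;a|q)=\sum_{k=0}^n\frac{(q^{-n};q)_k}{(qa;q)_k(q;q)_k}\,(qx)^k,
\]
together with the analogous expansion of $p_j(x;b|q)$ on the right-hand side, interchanging the order of summation, and equating the coefficient of $(qx)^k$ for each $k\in\{0,1,\dots,n\}$, the claimed identity reduces to the triangular system
\[
\frac{(q^{-n};q)_k(qb;q)_k}{(qa;q)_k}=\sum_{j=k}^n c_{n,j}\,(q^{-j};q)_k,
\]
where $c_{n,j}$ is the proposed connection coefficient. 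Since $(q^{-j};q)_j\ne 0$ and $(q^{-j};q)_k=0$ for $j<k$, this is a unitriangular linear system that uniquely determines the $c_{n,j}$, so it suffices to check that the stated expression satisfies it.

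Next, I would substitute the stated formula for $c_{n,j}$, change the summation variable via $m=n-j$, and simplify using \myref{qPochAdd}--\myref{qPochaqmnqk}. The telescoping $(q^{m+1};q)_{n-m}=(q;q)_n/(q;q)_m$, the identity $(bq^{1-m}/a;q)_m=(-b/a)^m q^{-\binom{m}{2}}(a/b;q)_m$ obtained from \myref{qPochaqmnqn} with $a\mapsto bq/a,\ n\mapsto m$, and the factorizations $(qb;q)_{n-m}=(qb;q)_k(q^{k+1}b;q)_{n-m-k}$ together with $(qa;q)_n=(qa;q)_k(q^{k+1}a;q)_{n-k}$ (valid for $0\le m\le n-k$) cause all $k$-dependent prefactors to cancel in parallel on the two sides. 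After dividing out the common factor, what remains is the compact identity
\[
(yc;q)_N=\sum_{m=0}^N\frac{(q;q)_N}{(q;q)_m(q;q)_{N-m}}\,y^m(c;q)_m(y;q)_{N-m},
\]
where $N=n-k$, $y=q^{k+1}b$, and $c=a/b$.

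Finally, this last identity follows directly from the $q$-binomial theorem \myref{qbinom}. Writing the tautological factorization
\[
\frac{(ycz;q)_\infty}{(z;q)_\infty}=\frac{(yz;q)_\infty}{(z;q)_\infty}\cdot\frac{(ycz;q)_\infty}{(yz;q)_\infty},
\]
expanding each of the three factors as a power series in $z$ via \myref{qbinom}, and equating coefficients of $z^N$ on both sides yields the identity (after multiplication by $(q;q)_N$). The main obstacle is the bookkeeping in the middle step: several $q$-Pochhammer symbols must be split, merged, and shifted in a coordinated way for the leftover sum to exhibit the $q$-binomial structure. Once that reorganization is carried out, the $q$-binomial theorem closes the argument.
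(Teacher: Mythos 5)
Your proposal is correct, but it proves the theorem by a genuinely different route than the paper. The paper \emph{derives} the coefficients rather than verifying a guessed formula: writing $p_n(x;a|q)=\sum_j c_{n,j}\,p_j(x;b|q)$, it expresses $c_{n,j}$ as a ratio of inner products for the discrete linear functional $\langle{\bf u}_b,\cdot\rangle$ behind the orthogonality \myref{lqLor}, and then evaluates $\langle{\bf u}_b,p_n(\cdot;a|q)\,p_j(\cdot;b|q)\rangle$ in closed form by applying a $q^{-1}$-analogue of summation by parts $j$ times and the Rodrigues-type formula a further $n-j$ times, telescoping everything down to $\langle{\bf u}_{bq^j},1\rangle$. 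Your route---expand both sides in the monomial basis, reduce to the triangular system, and verify it---does work, and I checked the bookkeeping you left implicit: with $m=n-j$, using $(q^{m+1};q)_{n-m}=(q;q)_n/(q;q)_m$, the identity $(bq^{1-m}/a;q)_m=(-b/a)^m q^{-{m\choose 2}}(a/b;q)_m$ from \myref{qPochaqmnqn}, and $(q^{-(n-m)};q)_k=\frac{(q;q)_{n-m}}{(q;q)_{n-m-k}}(-1)^k q^{{k\choose 2}-(n-m)k}$, all powers of $q$ collapse to $q^{m(k+1)}$ times $m$-independent factors, and after the splittings $(qb;q)_{n-m}=(qb;q)_k(q^{k+1}b;q)_{n-m-k}$ and $(qa;q)_n=(qa;q)_k(q^{k+1}a;q)_{n-k}$ the system reduces exactly to
\[
(yc;q)_N=\sum_{m=0}^{N}\frac{(q;q)_N}{(q;q)_m(q;q)_{N-m}}\,y^m(c;q)_m(y;q)_{N-m},
\qquad N=n-k,\ y=q^{k+1}b,\ c=a/b,
\]
where $yc=q^{k+1}a$; this is precisely your displayed $q$-Vandermonde identity, and your three-factor application of the $q$-binomial theorem \myref{qbinom} establishes it. As to what each approach buys: yours is purely algebraic, hence proves \myref{ConCoef3} as an identity of rational functions in the parameters, valid for generic (even complex) $a,b$ with $(qa;q)_n\neq 0$ rather than only $a,b\in(0,q^{-1})$, whereas the paper's orthogonality/Rodrigues computation needs those parameter restrictions but is constructive (it produces the coefficients without knowing them in advance) and showcases the dual-functional technique the authors advertise in the introduction. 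Two trivial quibbles: your matrix $(q^{-j};q)_k$ is triangular with nonzero diagonal entries $(q^{-k};q)_k$, not unitriangular; and the uniqueness remark is dispensable, since verifying the system already establishes the identity because $\{p_j(x;b|q)\}_{j=0}^{n}$ is a basis of the polynomials of degree at most $n$.
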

By starting with the generating function 
for the little $q$-Laguerre/Wall polynomials
\cite[(14.20.11)]{Koekoeketal}, we derive 
generalizations using the connection relation 
for these polynomials. 
\begin{thm} \label{theo:11}
Let $(a,b)\in (0,q^{-1})$, $|q|<1$, 
$|t|<\min\{(1-q)(1-aq)/a,1\}$. Then
\[
\frac{(t;q)_\infty}{(xt;q)_\infty}
\,{}_0\phi_1 \left( \begin{array}{c}
-\\ aq \end{array};q,aqxt\right)
=\sum_{n=0}^\infty\frac{q^{\binom n2}
(-t)^n(b q;q)_n}{(q;q)_n(a q;q)_n}\,p_n(x;b|q)
\,{}_1\phi_1\left(\begin{array}{c}a/b\\ a q^{n+1}
\end{array};q,b q^{n+1} t\right).
\]
\end{thm}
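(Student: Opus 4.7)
The plan is to follow the series rearrangement approach used in the previous theorems. I would begin from the generating function \cite[(14.20.11)]{Koekoeketal} for the little $q$-Laguerre/Wall polynomials,
$$
\frac{(t;q)_\infty}{(xt;q)_\infty}\,\qhyp01{-}{aq}{q,aqxt}=\sum_{n=0}^\infty\frac{q^{n\choose 2}(-t)^n}{(q;q)_n}\,p_n(x;a|q),
$$
and substitute the connection relation \myref{ConCoef3} from Theorem \ref{theo:10} into the right-hand side. This produces a double sum $\sum_{n=0}^\infty\sum_{j=0}^n$ whose terms factor as a $j$-dependent piece (containing $p_j(x;b|q)(bq;q)_j$) times an $(n,j)$-dependent piece.

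Next I would shift the outer summation by setting $n=j+k$, interchange the order of summation, and rename $j\mapsto n$ so that the outer index matches the statement. The inner sum over $k$ must then be identified with $\,{}_1\phi_1(a/b;aq^{n+1};q,bq^{n+1}t)$. The algebraic simplification uses \myref{qPochAdd} in the forms $(q^{k+1};q)_n(q;q)_k=(q;q)_{n+k}$ and $(aq;q)_{n+k}=(aq;q)_n(aq^{n+1};q)_k$, and transforms the mixed factor $(bq^{1-k}/a;q)_k$ via \myref{qPochaqmnqn} (applied with $a\mapsto bq/a$ and $n\mapsto k$) into $(-b/a)^k q^{-{k\choose 2}}(a/b;q)_k$. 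Combining the $q$-powers using ${n+k\choose 2}={n\choose 2}+nk+{k\choose 2}$ and collecting signs then yields the canonical $\,{}_1\phi_1$ summand form prescribed by \myref{bhs}.

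The interchange of the summations is justified by absolute convergence, exactly as in the proof of Theorem \ref{theo:2}: Lemma \ref{lem:1} supplies polynomial-in-$n$ majorants for $|c_{n,j;a,b}|$ and $|p_j(x;b|q)|$, so that the double sum is dominated by $\sum_{n\ge 0}\bigl(a|t|/((1-q)(1-aq))\bigr)^n\,\mathrm{poly}(n)$, which converges under the hypothesis $|t|<\min\{(1-q)(1-aq)/a,1\}$. The main obstacle I anticipate is the $q$-power and sign bookkeeping: one must verify that ${n-j+1\choose 2}$ from the connection coefficient, ${n\choose 2}$ from the generating function, and $-{k\choose 2}$ arising from \myref{qPochaqmnqn} combine after the index shift to give exactly the $q^{{k\choose 2}}$ appearing in \myref{bhs}, while the various powers of $a$, $b$, and $(-1)$ collapse into the single factor $(-bq^{n+1}t)^k$. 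Once this reconciliation is complete, the identification with $\,{}_1\phi_1(a/b;aq^{n+1};q,bq^{n+1}t)$ is immediate.
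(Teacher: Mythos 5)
Your proposal is correct and follows essentially the same route as the paper's proof: start from the generating function \myref{genlFunc}, insert the connection relation \myref{ConCoef3}, shift the $n$ index by $j$ and interchange the sums (justified, as in the paper, by the geometric-with-ratio $a|t|/((1-q)(1-aq))$ majorant built from Lemma \ref{lem:1}), then simplify via \myref{qPochAdd}--\myref{bhs} to identify the inner sum as the ${}_1\phi_1$. Your explicit bookkeeping --- in particular applying \myref{qPochaqmnqn} to $(bq^{1-k}/a;q)_k$ and checking that the exponents collapse to the $(-1)^kq^{{k\choose 2}}(bq^{n+1}t)^k$ summand required by \myref{bhs} --- is accurate and in fact spells out details the paper's terse proof leaves implicit.
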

\begin{proof}
We start with the generating function for 
little $q$-Laguerre/Wall polynomials found in 
Koekoek et al. \cite[(14.20.11)]{Koekoeketal}
\begin{equation}
\label{genlFunc}
\frac{(t;q)_\infty}{(xt;q)_\infty}
\,{}_0\phi_1\left(\begin{array}{c}
-\\ aq \end{array};q,aqxt \right)
=\sum_{n=0}^\infty\frac{(-1)^nq^{\binom n2}}
{(q;q)_n}p_n(x;a|q)t^n.
\end{equation}
Using the connection relation (\ref{ConCoef3}) 
in (\ref{genlFunc}), reversing the orders 
of the summations, shifting the $n$ index 
by $j$, and using \myref{2:4} 
through \myref{2:12}, obtains the desired 
result since
$|a_n|=|t|^n/(1-q)^n$, $|c_{n,k}|\le 
K_8 [n+1]^{\sigma_6}$, and
\begin{equation}
\label{lqlageq}
|p_n(x;a|q)|\le  |a|^n[n+1]^{\sigma_7}/
(1-|aq|)^n\le a^n(n+1)^{\sigma_7}/(1-aq)^n,
\end{equation}
where
$\sigma_6$ and $\sigma_7$ are independent 
of $n$ implies
\[
\sum_{n=0}^\infty |a_n|\sum_{k=0}^n |c_{k,n}| 
|p_k(x;a|q)|<\infty.
\]
Therefore the theorem holds.
\end{proof}
\section{$q$-Laguerre polynomials}
\label{qLaguerrepolynomials}

The $q$-Laguerre polynomials are defined as
\cite[(14.21.1)]{Koekoeketal}
\begin{eqnarray*}
L_n^{(\alpha)}(x;q)\,
&&=\frac{(q^{\alpha+1};q)_n}{(q;q)_n}\,
{}_1\phi_1
\left(\begin{array}{c}
q^{-n}\\q^{\alpha + 1}
\end{array}; q,-q^{n+\alpha+1}x
\right)
=\frac1{(q;q)_n}\,
{}_2\phi_1\left(
\begin{array}{c} q^{-n},-x\\0
\end{array};q,q^{n+\alpha+1}
\right).
\end{eqnarray*}
\begin{thm} \label{theo:12}
Let $\alpha,\beta\in (-1,\infty),$ $|q|<1$.
The connection relation for the $q$-Laguerre 
polynomials is given as
\begin{equation}
\label{ConCoef2}
L_n^{(\alpha)}(x;q)=
\frac{q^{n(\alpha-\beta)} }{(q;q)_n}
\sum_{j=0}^n (-1)^{n-j}q^{{\binom {n-j}2}}
(q^{n-j+1};q)_j(q^{j-n+\beta-\alpha+1};q)_{n-j}
\,L_j^{(\beta)}(x;q).
\end{equation}
\end{thm}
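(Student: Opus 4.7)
The plan is to mirror the proof strategy used for Theorem \ref{theo:10} (the little $q$-Laguerre/Wall case), replacing the discrete orthogonality measure on the geometric grid $\{q^k\}_{k\ge0}$ with the orthogonality measure for the $q$-Laguerre polynomials. Writing
$$
L_n^{(\alpha)}(x;q)=\sum_{j=0}^n c_{n,j;\alpha,\beta}\,L_j^{(\beta)}(x;q),
$$
the coefficients are extracted by an inner product against $L_j^{(\beta)}$ with respect to the $q$-Laguerre orthogonality functional ${\bf u}_\beta$ (see \cite[\S 14.21]{Koekoeketal}), giving
$$
c_{n,j;\alpha,\beta}=\frac{\langle {\bf u}_\beta,L_n^{(\alpha)}(x;q)L_j^{(\beta)}(x;q)\rangle}{\langle {\bf u}_\beta,L_j^{(\beta)}(x;q)L_j^{(\beta)}(x;q)\rangle},
$$
and the job reduces to evaluating the numerator in closed form, since the squared norm is standard.

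First I would record the two tools that drive the whole calculation: the Rodrigues-type formula for $L_j^{(\beta)}(x;q)$, which expresses it as a $j$-fold $q$-difference acting on the weight, and the $q$-analog of integration by parts for the functional ${\bf u}_\beta$. Applied $j$ times, summation by parts pushes those $q$-differences off of $L_j^{(\beta)}$ and onto $L_n^{(\alpha)}$, at the cost of parameter shifts inside the weight. Using the well-known identity
$$
({\mathscr D}_{q^{-1}})^j L_n^{(\alpha)}(x;q)=(-q^\alpha)^j q^{\binom{j}{2}}\frac{(q^{n-j+1};q)_j}{(1-q)^j}L_{n-j}^{(\alpha+j)}(x;q)
$$
(compare \cite[(14.21.6)]{Koekoeketal}), the numerator collapses to
$$
\langle {\bf u}_\beta,L_n^{(\alpha)}(x;q)L_j^{(\beta)}(x;q)\rangle=\text{(explicit $q$-factors)}\cdot\langle {\bf u}_{\beta+j},L_{n-j}^{(\alpha+j)}(x;q)\rangle,
$$
and one further application of the Rodrigues formula for $L_{n-j}^{(\alpha+j)}$, combined with $n-j$ more summations by parts, reduces everything to $\langle {\bf u}_{\beta+n},1\rangle$, which is explicit.

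Second, I would carefully track the powers of $q$, the signs, and the shifted $q$-Pochhammer factors through these reductions, using \eqref{qPochAdd}--\eqref{qPochaqnqk} to rewrite ratios of the form $(q^{\alpha+1};q)_n/(q^{\beta+1};q)_j$ and to produce the factor $(q^{j-n+\beta-\alpha+1};q)_{n-j}$ that appears in \eqref{ConCoef2}. Dividing the resulting expression for the numerator by the known squared norm
$$
\langle {\bf u}_\beta,L_j^{(\beta)}(x;q)L_j^{(\beta)}(x;q)\rangle
=\frac{(q^{\beta+1};q)_j}{(q;q)_j\,q^j}\,h_\beta
$$
(where $h_\beta$ absorbs the normalization constants of the measure), the parameter $\beta$-dependent factors cancel exactly against those produced by the integration-by-parts chain, leaving only the $\alpha-\beta$ dependence through the prefactor $q^{n(\alpha-\beta)}$.

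The main obstacle I expect is the bookkeeping: keeping the parameter shifts aligned in the iterated Rodrigues/summation-by-parts step and verifying that the boundary contributions (analogous to the $1/(q;q)_{-1}=0$ observation used in Theorem \ref{theo:10}) vanish in the $q$-Laguerre setting, so that only the interior sum survives. Once those reductions are done, matching the leftover $q$-Pochhammer symbols to those in \eqref{ConCoef2} is a purely mechanical application of \eqref{qPochAdd}--\eqref{qPochaqmnqk}.
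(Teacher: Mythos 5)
Your proposal is correct in outline, but it is not the route the paper actually takes. The paper's proof of Theorem \ref{theo:12} is a two-line transfer argument: it observes that the connection relation \myref{ConCoef2} follows from the already-established little $q$-Laguerre/Wall relation \myref{ConCoef3} of Theorem \ref{theo:10} via the known correspondence between little $q$-Laguerre/Wall and $q$-Laguerre polynomials (Koekoek \emph{et al.}, p.~521, which involves the substitution $a\mapsto q^{\alpha}$ together with base inversion $q\mapsto q^{-1}$, the resulting $(\,\cdot\,;q^{-1})$-Pochhammer symbols being converted back to base $q$ by identities of the type \myref{qPochaqmnqn}). The paper explicitly notes that ``one could obtain the above result by following an analogous proof as applied to the little $q$-Laguerre/Wall polynomials'' --- which is precisely your plan --- but then declines it in favor of the shortcut. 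Your route is therefore legitimate and endorsed by the authors, and it buys something the paper's proof does not: a self-contained derivation that does not depend on Theorem \ref{theo:10}, with the coefficient $(q^{j-n+\beta-\alpha+1};q)_{n-j}$ emerging directly from the $n-j$ summations by parts acting on the ratio of weights $w_{\beta}/w_{\alpha}\propto x^{\beta-\alpha}$, exactly parallel to how $(bq^{1+j-n}/a;q)_{n-j}$ arises in the paper's proof of \myref{ConCoef3}. What the paper's route buys instead is brevity and the avoidance of two delicate points that your sketch would have to settle carefully: first, since the $q$-Laguerre moment problem is indeterminate, you must fix one concrete realization of ${\bf u}_{\beta}$ (the continuous weight \myref{HSC1}, the bilateral sum \myref{bilateralorthogqLag}, or the $q$-integral \myref{JacksonqorthogqLag}) and verify the vanishing of boundary terms at $0$ and $\infty$ for that realization, the analogue of the $1/(q;q)_{-1}=0$ observation; second, the $q$-difference identity you quote needs its constants and argument scaling fixed against the actual shift relations in \cite[\S 14.21]{Koekoeketal}, where the lowering operation produces $L_{n-1}^{(\alpha+1)}$ with a rescaled argument, so the iterated form you state must be rechecked before the bookkeeping can close. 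Neither point is an obstruction --- both are of the same nature as the steps carried out for Theorem \ref{theo:10} --- so your proposal stands as a genuinely different, more laborious but independent proof of the same identity.
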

\begin{proof}
One could obtain the above result by following 
an analogous proof as applied  to the little 
$q$-Laguerre/Wall polynomials. Nevertheless
the result follows by using the relation 
between the little $q$-Laguerre/Wall and the 
$q$-Laguerre polynomials \cite[p.~521]{Koekoeketal}.
\end{proof}
By starting with generating functions 
for the $q$-Laguerre polynomials 
\cite[(14.21.14--16)]{Koekoeketal}, 
we derive generalizations of these
generating functions using the connection 
relation for $q$-Laguerre polynomials 
(\ref{ConCoef2}).  Note however that the 
generating function for the $q$-Laguerre 
polynomials \cite[(14.21.13)]{Koekoeketal} 
remains unchanged when one applies 
the connection relation (\ref{ConCoef2}).

\begin{thm} \label{theo:13}
Let $\alpha,\beta\in (-1,\infty),$ $|q|<1$, $|t|<(1-q^{\alpha+1})(1-q)$. Then
\begin{equation}
\frac1{(t;q)_\infty}
\,{}_0\phi_1\left(\begin{array}{c}
-\\q^{\alpha+1} \end{array};
q,-xtq^{\alpha+1}\right)=\sum_{n=0}^\infty
\frac{(q^{\alpha-\beta}t)^n
L_n^{(\beta)}(x;q)}{(q^{\alpha+1};q)_n}
\,{}_2\phi_1\left(\begin{array}{c}
q^{\alpha-\beta},0\\q^{\alpha+n+1}
\end{array};q,t\right). \label{qLag1}
\end{equation}
\end{thm}

\begin{proof}
We start with the generating function 
for $q$-Laguerre polynomials found in 
Koekoek et al. \cite[(14.21.14)]{Koekoeketal}
\begin{equation}
\label{genLFunc1}
\frac1{(t;q)_\infty}\,{}_0\phi_1
\left(\begin{array}{c}
-\\q^{\alpha+1}\end{array};
q,-xtq^{\alpha+1}\right)
=\sum_{n=0}^\infty \frac
{L_n^{(\alpha)}(x;q)}{(q^{\alpha+1};q)_n}t^n.
\end{equation}
Using the connection relation (\ref{ConCoef2}) 
in (\ref{genLFunc1}),
reversing the orders of the summations,
shifting the $n$ index by $j$, and using
\myref{2:4} through \myref{2:12},
obtains the desired result since  $|a_n|\le |t|^n/(1-q^{\alpha+1})^n$,
$|c_{n,k}|\le  K_9 [n+1]^{\beta-\alpha+4}$, and
\begin{equation}
\label{qleq}
|L_n^{(\alpha)}(x;q)|\le [n+1]^{\sigma_8}/(1-q)^n,
\end{equation}
implies
\[
\sum_{n=0}^\infty |a_n|\sum_{k=0}^{n} |c_{k,n}| |L_k^{(\alpha)}(x;q)|\le K_9 \sum_{n=0}^\infty \frac{|t|^n}{(1-q^{\alpha+1})^n(1-q)^n} 
(n+1)^{\sigma_8}<\infty.
\]
Therefore the theorem holds.
\end{proof}

\begin{thm} \label{theo:14}
Let $\alpha,\beta\in (-1,\infty),$ $|q|<1$, $|t|<(1-q^{\alpha+1})(1-q)$. Then
\begin{equation}
(t;q)_\infty\,
{}_0\phi_2\left(\begin{array}{c}
-\\
q^{\alpha+1},t\end{array}
;q,-xtq^{\alpha+1}\right)
=\sum_{n=0}^\infty\frac
{(-tq^{\alpha-\beta})^n
q^{\binom n2}L_n^{(\beta)}(x;q)}
{(q^{\alpha+1};q)_n} \,{}_1\phi_1
\left(\begin{array}{c}
q^{\alpha-\beta}\\ q^{\alpha+n+1}
\end{array};q,tq^n\right).
\label{qLag2}
\end{equation}
\end{thm}
\begin{proof}
We start with the generating function 
for the $q$-Laguerre polynomials found 
in Koekoek et al \cite[(14.21.15)]{Koekoeketal}
\begin{equation}
\label{genLFunc2}
(t;q)_\infty\, {}_0\phi_2
\left(\begin{array}{c}-\\
q^{\alpha+1},t\end{array};
q,-xtq^{\alpha+1}\right)
=\sum_{n=0}^\infty\frac
{(-t)^nq^{\binom n2}}{(q^{\alpha+1};q)_n}
L_n^{(\alpha)}(x;q).
\end{equation}
Using the connection relation 
(\ref{ConCoef2}) in (\ref{genLFunc2}),
reversing the orders of the summations,
shifting the $n$ index by $j$, and using
\myref{2:4} through \myref{2:12},
obtains the desired result since, again,
$|a_n|\le |t|^n/(1-q^{\alpha+1})^n$.
\end{proof}

\begin{thm} \label{theo:15}
Let $\alpha,\beta\in (-1,\infty)$, 
$\gamma\in\mathbb C$, $|q|<1$, 
$|t|<1-q$. Then
\begin{equation}
\frac{(\gamma t;q)_\infty}{(t;q)_\infty}
\,{}_1\phi_2\left(\begin{array}{c}
\gamma\\ q^{\alpha+1},\gamma t
\end{array};q,-xtq^{\alpha+1}
\right)=\sum_{n=0}^\infty
\frac{(\gamma;q)_n(tq^{\alpha-\beta})^n}
{(q^{\alpha+1};q)_n}L_n^{(\beta)}(x;q)
\,{}_2\phi_1\left(\begin{array}{c}
q^{\alpha-\beta},\gamma q^n\\
q^{\alpha+n+1}\end{array}
;q,t\right).\label{qLag3}
\end{equation}
\end{thm}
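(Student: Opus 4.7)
The plan is to follow the template established in Theorems \ref{theo:13} and \ref{theo:14}, now applied to the generating function \cite[(14.21.16)]{Koekoeketal},
$$
\frac{(\gamma t;q)_\infty}{(t;q)_\infty}\,\qhyp{1}{2}{\gamma}{q^{\alpha+1},\gamma t}{q,-xtq^{\alpha+1}}=\sum_{N=0}^\infty \frac{(\gamma;q)_N}{(q^{\alpha+1};q)_N}\,L_N^{(\alpha)}(x;q)\,t^N.
$$
First I would insert the connection relation (\ref{ConCoef2}) to expand each $L_N^{(\alpha)}(x;q)$ in the basis $\{L_j^{(\beta)}(x;q)\}_{j=0}^N$, reverse the order of summation, and then shift $N\to n+k$ so that $n$ becomes the new outer index attached to $L_n^{(\beta)}$ and $k$ is the inner summation index.

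The crux is to simplify the resulting inner sum over $k$ into the advertised ${}_2\phi_1$. Two identities do the work. The factor $(q^{k+1};q)_n$ carried over from the connection relation combines with $(q;q)_{n+k}=(q;q)_n(q^{n+1};q)_k$ from (\ref{qPochAdd}) to give $1/\bigl((q;q)_n(q;q)_k\bigr)$, cleanly separating the $n$ and $k$ dependencies. The remaining awkward factor $(q^{1+\beta-\alpha-k};q)_k$ is handled by (\ref{qPochaqmnqn}) with $a=q^{1+\beta-\alpha}$, turning it into $(q^{\alpha-\beta};q)_k$ multiplied by $(-q^{1+\beta-\alpha})^k q^{-k-{k\choose 2}}$. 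When this prefactor is combined with the $(-1)^k q^{k\choose 2}$ and the power $q^{k(\alpha-\beta)}$ already present in the summand, the various $q$-exponents telescope and only $t^k$ survives; the inner sum is then exactly $\qhyp{2}{1}{q^{\alpha-\beta},\gamma q^n}{q^{\alpha+n+1}}{q,t}$, and the extracted $n$-dependent prefactor assembles into $(\gamma;q)_n(tq^{\alpha-\beta})^n/(q^{\alpha+1};q)_n$, matching (\ref{qLag3}).

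The principal obstacle will be $q$-bookkeeping: verifying that the binomial exponent ${n-j\choose 2}$ from (\ref{ConCoef2}), the exponent ${k\choose 2}$ produced by (\ref{qPochaqmnqn}), and the explicit factors $q^{N(\alpha-\beta)}$ and signs $(-1)^{n-j}$ all cancel cleanly under the shift $N=n+k$. Once these cancellations are checked, the remaining manipulation is a routine application of (\ref{qPochAdd})--(\ref{bhs}).

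To justify the reversal of summation, I would mirror the estimates used in Theorem \ref{theo:13}. The ratio $(\gamma;q)_N/(q^{\alpha+1};q)_N$ is uniformly bounded in $N$, so $|a_N|\le C|t|^N$; the bound (\ref{qleq}) gives $|L_j^{(\beta)}(x;q)|\le [j+1]_q^{\sigma_8}/(1-q)^j$; and the explicit form of $c_{N,j}$ in (\ref{ConCoef2}), combined with Lemma \ref{lem:1}, shows at most polynomial growth in $N$. Together these bounds yield absolute convergence of the double series on $|t|<1-q$, legitimising Fubini and completing the proof.
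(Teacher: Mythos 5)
Your proposal is correct and follows essentially the same route as the paper's proof: insert the connection relation \myref{ConCoef2} into the generating function (the paper's proof cites \cite[(14.21.15)]{Koekoeketal} but displays the same ${}_1\phi_2$ generating function you use), shift and reverse the double sum, simplify via \myref{qPochAdd}--\myref{bhs}, and justify the interchange with bounds of the same type (the paper uses $|a_n|\le |t|^n[n+1]^{|\gamma|+1}/(\alpha+1)$, which your uniform bound on $(\gamma;q)_N/(q^{\alpha+1};q)_N$ subsumes). One small slip in your bookkeeping: $(q^{k+1};q)_n/(q;q)_{n+k}=1/(q;q)_k$, not $1/\bigl((q;q)_n(q;q)_k\bigr)$ --- fortunately the $n$-dependent prefactor you ultimately assemble is the correct one, so nothing downstream is affected.
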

\begin{proof}
We start with the generating function for the 
$q$-Laguerre polynomials found in Koekoek et al. \cite[(14.21.15)]{Koekoeketal}
\begin{equation}
\label{genLFunc3}
\frac{(\gamma t;q)_\infty}{(t;q)_\infty}
\,{}_1\phi_2
\left(
\begin{array}{c}
\gamma\\
q^{\alpha+1},\gamma t
\end{array}
;q,-xtq^{\alpha+1}
\right)
=\sum_{n=0}^\infty
\frac{(\gamma;q)_n}{(q^{\alpha+1};q)_n}
L_n^{(\alpha)}(x;q)t^n.
\end{equation}
Using the connection relation 
(\ref{ConCoef2}) in (\ref{genLFunc3}),
reversing the orders of the summations,
shifting the $n$ index by $j$, and using
\myref{2:4} through \myref{2:12},
obtains the result
\[
|a_n|\le \frac{|t|^n [n+1]^{|\gamma|+1}}
{\alpha+1}.
\]
Therefore the theorem holds.
\end{proof}

\section{Askey-Wilson polynomials}
\label{AskeyWilsonpolynomials}

The Askey-Wilson polynomials are defined as
\cite[(14.1.1)]{Koekoeketal}
\[
p_n(x;{\bf a}|q):=a^{-n}(ab,ac,ad;q)_n\,
\qhyp43{q^{-n},abcdq^{n-1},ae^{i\theta},
ae^{-i\theta}}{ab,ac,ad}{q,q},
\]
where ${\bf a}:=\{a,b,c,d\}$.
Throughout this section and the next section 
on the continuous $q$-ultraspherical/Rogers
polynomials, $x=\cos\theta$.
We derive generalizations of generating 
functions for the Askey-Wilson polynomials 
\cite[(14.1.13--15)]{Koekoeketal} using its 
connection relation with one free parameter 
\cite[(7.6.8--9)]{GaspRah}
\begin{equation}
p_n(x;{\bf a}|q)=\sum_{k=0}^nc_{k,n}({\bf a}
;\alpha|q)p_k(\alpha,b,c,d|q),
\label{conncoefAW}
\end{equation}
where
\[
c_{k,n}:=\frac{\alpha^{n-k}(a/\alpha;q)_{n-k}
(abcdq^{n-1};q)_k(q,bc,bd,cd;q)_n}{(q,bc,bd,cd,
\alpha bcdq^{k-1};q)_k(q,\alpha bcdq^{2k};q)_{n-k}}.
\]
Due to the symmetry in $a,b,c,d$ of the 
Askey-Wilson polynomials, the generating 
functions \cite[(14.1.13--15)]{Koekoeketal} 
are all equivalent. Therefore, we will 
only consider \cite[(14.1.13)]{Koekoeketal}.
\begin{thm} \label{theo:2}
Let $x\in[-1,1],$ $|t|<(1-q)^3$, $a,b,c,d
\in\mathbb R$ or occur in complex conjugate
pairs if complex, and 
$\max(|a|,|b|,|c|,|d|)<1$, $|q|<1$. Then
\begin{eqnarray}
\hspace{-0.3cm}
&&\qhyp21{ae^{i\theta},be^{i\theta}}
{ab}{q,te^{-i\theta}}
\qhyp21{ce^{-i\theta},de^{-i\theta}}
{cd}{q,te^{i\theta}}\nonumber
=\sum_{n=0}^\infty
\frac{t^n(\alpha bcd/q, \pm \sqrt{abcd/q},
\pm \sqrt{abcd};q)_n}{(ab,cd,abcd/q,
\sqrt{\alpha bcd/q}, \pm \sqrt{\alpha bcd}
;q)_n} \nonumber\\ &&\hspace{4cm}\times
\qhyp43{a/\alpha,bcq^n,bdq^n,abcdq^{2n-1}}
{abq^n, abcdq^{n-1}, \alpha bcdq^{2n}}
{q,\alpha t}\frac{p_n(x;\alpha,b,c,d|q)}
{(q;q)_n}. \label{awthm}
\end{eqnarray}
\end{thm}
\begin{proof}A generating function for 
the Askey-Wilson polynomials can be found 
in Koekoek et al. \cite[(14.1.13)]{Koekoeketal}
\begin{equation}
\qhyp21{ae^{i\theta},be^{i\theta}}{ab}
{q,te^{-i\theta}}\qhyp21{ce^{-i\theta},de^{-i\theta}}
{cd}{q,te^{i\theta}}=\sum_{n=0}^\infty
\frac{t^n p_n(x;{\bf a}|q)}{(q,ab,cd;q)_n}.
\label{genfun14113}
\end{equation}
Using the connection relation for these 
polynomials in this generating function
produces a double infinite sum. In order 
to justify reversing the summation symbols
we show that
\[
\sum_{n=0}^\infty |a_n|\sum_{k=0}^n 
|c_{k,n}| |p_n(x;\alpha,b,c,d|q)|<\infty.
\]
Taking into account Lemma \ref{lem:4} we have
\[
|a_n|\le K_1 \frac{|t|^n[n+1]^2_q}{(q;q)_n^3},
\]
with $K_1=\max\{1,(\Re(A+\beta)
\Re(\gamma+\delta))^{-1}\}$, being 
$a=q^A$, $b=q^\beta$, $c=q^\gamma$, 
and $d=q^\delta$,
\begin{equation}
\label{pneq}
|p_n(\alpha,b,c,d|q)|\le K_2 
\sum_{k=0}^n [k+1]_q^\sigma\le 
K_2 (n+1)^{\sigma+1},
\end{equation}
with $K_2=\max\{1,(\Re(A+\beta)\Re(A+\gamma)
\Re(A+\delta)^{-1}\}$, and $\sigma=3A+\beta
+\gamma+\delta+3$. Next, following an 
analogous idea we can  find as in the proof 
of \cite[Theorem 1]{CohlMacK} that
\[
|c_{k,n}|\le K_4 (1+k)^{\sigma_2+1}(1+n)^{\sigma_2},
\]
where $K_4$ and $\sigma_2$ are constants 
independent of $n$. Combining these results 
demonstrates
\[
\sum_{n=0}^\infty |a_n|\sum_{k=0}^n |c_{k,n}| 
|p_k(x;\alpha,b,c,d|q)|\le K_5 \sum_{n=0}^\infty 
\frac {|t|^n}{(1-q)^{3n}} (n+1)^{3\sigma+5}<\infty.
\]
Hence, the proof follows as above by starting 
with (\ref{genfun14113}), inserting 
(\ref{conncoefAW}), shifting the $n$ index 
by $k,$ reversing the order of summation 
and using \myref{2:1}--\myref{2:7},
\myref{2:9}, \myref{2:12}.
\end{proof}

\section{Definite integrals, infinite series, and $q$-integrals}
\label{Definiteintegralsinfiniteseriesandqintegrals}

Consider a sequence of orthogonal polynomials 
$(p_k(x;{\boldsymbol \alpha}))$ (over a domain 
$A$, with positive weight $w(x;{\boldsymbol 
\alpha})$) associated with a linear functional 
${\bf u}$, where ${\boldsymbol\alpha}$ is a 
set of fixed parameters.
Define $s_k,$ $k\in\mathbb N_0$ by
\[
s^2_k:=\int_A \big\{p_k(x;{\boldsymbol 
\alpha})\big\}^2\, w(x;{\boldsymbol \alpha})\, dx.
\]
In order to justify interchange between a 
generalized generating function via connection 
relation and an orthogonality relation for $p_k$,
we show that the double sum/integral 
converges in the $L^2$-sense with respect
to the weight $w(x;{\boldsymbol \alpha})$.
This requires
\begin{eqnarray} \label{form-inve-L2}
\sum_{k=0}^\infty d_k^2 s_k^2<\infty,
\end{eqnarray}
where
\[
d_k=\sum_{n=k}^\infty a_n c_{k,n}.
\]
Here, $a_n$ is the coefficient multiplying 
the orthogonal polynomial in the
original generating function, and $c_{k,n}$ 
is the connection coefficient for $p_k$
(with appropriate set of parameters).

\begin{lemma}
\label{lemmasum}
Let $\bf u$ be a classical linear functional 
and let $(p_n(x))$, $n\in\mathbb N_0$ be 
the sequence of orthogonal polynomials 
associated with $\bf u$.
If $|p_n(x)|\le K(n+1)^\sigma \gamma^n$, 
with $K$, $\sigma$ and $\gamma$ constants 
independent of $n$, then
$|s_n|\le  K(n+1)^\sigma \gamma^n |s_0|$.
\end{lemma}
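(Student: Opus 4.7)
The plan is to reduce the claim to a one-line pointwise-to-$L^2$ estimate, provided one first pins down the value of $p_0$. Under the standard normalization used for the classical families appearing in Sections \ref{AskeyWilsonpolynomials}--\ref{qLaguerrepolynomials} (and for every classical family of the $q$-Askey scheme in Koekoek \emph{et al.} \cite{Koekoeketal}), inspecting the hypergeometric representation at $n=0$ collapses the $\,{}_r\phi_s$ to $1$, so $p_0(x;\boldsymbol\alpha)\equiv 1$. Consequently
$$
s_0^2 = \int_A \{p_0(x;\boldsymbol\alpha)\}^2\, w(x;\boldsymbol\alpha)\,dx = \int_A w(x;\boldsymbol\alpha)\,dx,
$$
so that $s_0^2$ coincides with the total mass of the orthogonality weight.

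With this identification in hand, I would substitute the hypothesis $|p_n(x)|\le K(n+1)^\sigma\gamma^n$ into the definition of $s_n^2$, invoke monotonicity of the integral against the positive weight $w(x;\boldsymbol\alpha)$, and pull the constants outside:
$$
s_n^2 = \int_A \{p_n(x;\boldsymbol\alpha)\}^2\, w(x;\boldsymbol\alpha)\,dx \le K^2(n+1)^{2\sigma}\gamma^{2n}\int_A w(x;\boldsymbol\alpha)\,dx = K^2(n+1)^{2\sigma}\gamma^{2n}\, s_0^2.
$$
Taking positive square roots then yields $|s_n|\le K(n+1)^\sigma\gamma^n|s_0|$, which is the assertion. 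The very same estimate goes through verbatim when $\int_A\,\cdot\, dx$ is replaced by an infinite series, a bilateral series, or a Jackson ($q$-)integral against a positive weight, since only positivity and monotonicity of the underlying linear functional are used. This is precisely the level of generality that is needed in Section \ref{Definiteintegralsinfiniteseriesandqintegrals}.

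The only subtle step, which I regard as the main obstacle, is the appeal to $p_0\equiv 1$; without this normalization the cleanest bound reads $|s_n|\le (K/|p_0|)(n+1)^\sigma\gamma^n|s_0|$, and the factor $1/|p_0|$ can simply be absorbed into a redefinition of $K$. Once this is observed, the proof is immediate, and its role is to serve as the final ingredient in verifying the $L^2$-convergence criterion \eqref{form-inve-L2}: combined with the pointwise polynomial bounds already established in \eqref{pneq}, \eqref{Cneq}, \eqref{lqlageq}, and \eqref{qleq}, it justifies the termwise interchange of orthogonality integrals with the generalized generating function expansions of the preceding sections.
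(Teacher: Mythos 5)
Your proof is correct and follows essentially the same route as the paper's: bound $p_n^2$ pointwise, use positivity of the linear functional to get $s_n^2\le\left(K(n+1)^\sigma\gamma^n\right)^2\langle{\bf u},1\rangle$, and identify $\langle{\bf u},1\rangle=s_0^2$. The only difference is presentational --- you make explicit the normalization $p_0\equiv 1$ (and how to absorb $1/|p_0|$ into $K$ otherwise), which the paper's one-line proof uses implicitly when writing $\langle{\bf u},1\rangle=s_0^2$.
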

\begin{proof}
Let $n\in\mathbb N_0$, then
\[
s_n^2=\langle {\bf u}, p^2_n\rangle \le 
\left(K(n+1)^\sigma \gamma^n\right)^2
\langle {\bf u}, 1\rangle= \left(K(n+1)^\sigma 
\gamma^n\right)^2  s_0^2.
\]
The result follows.
\end{proof}

Given
$|p_k(x;{\boldsymbol \alpha})|\le 
K(k+1)^\sigma \gamma^k$, with $K$, $\sigma$ 
and $\gamma$ constants independent of $k$,
an orthogonality relation for $p_k$, and 
$|t|<1/\gamma$, one has
\[
\sum_{n=0}^\infty|a_n|\sum_{k=0}^n|c_{k,n}s_k|<\infty,
\]
which implies
\[
\sum_{k=0}^\infty |d_ks_k|<\infty.
\]
Therefore one has confirmed (\ref{form-inve-L2}),
indicating that we are justified in 
reversing the order of our generalized sums
and the orthogonality relations under 
the above assumptions.

All polynomial families used throughout 
this paper fulfill such assumptions. See 
for instance (\ref{pneq}), (\ref{lqlageq}), 
(\ref{qleq}). Such inequalities depend 
entirely on the representation of the 
linear functional. In this section one 
has integral representations, infinite 
series, and representations in terms of 
the $q$-integral. In all the cases
Lemma \ref{lemmasum} can be applied and 
we are justified in interchanging 
the linear form and the infinite sum.
\medskip

\subsection{Definite integrals}

\subsubsection{Continuous $q$-ultraspherical/Rogers polynomials}

The property of orthogonality for continuous $q$-ultraspherical/Rogers
polynomials found in Koekoek {\it et al.} (2010) \cite[(3.10.16)]{Koekoeketal} is given by
\begin{equation}
\label{ortho}
\int_{-1}^1 C_m(x;\beta|q)C_n(x;\beta|q)
\frac{w(x;\beta)}{\sqrt{1-x^2}}\, dx=
2\pi\frac{(1-\beta)(\beta,q\beta;q)_\infty 
(\beta^2;q)_n}{(1-\beta q^n)
(\beta^2,q;q)_\infty (q;q)_n}\delta_{mn},
\end{equation}
where $w:(-1,1)\to[0,\infty)$ is the weight 
function defined by
\begin{equation}
\label{weight}
w(x;\beta|q):=
\left|
\frac{(e^{2i\theta};q)_\infty}
{(\beta e^{2i\theta};q)_\infty}
\right|
^2.
\end{equation}
We will use this orthogonality relation 
for proofs of the following definite
integrals.

\begin{cor}
Let $n\in\mathbb N_0,$ $\beta,\gamma
\in (-1,1)\setminus\{0\}$, $|q|<1$, 
$|t|<1-\beta^2$. Then
\begin{eqnarray*}
\int^1_{-1} && (te^{-i\theta};q)_\infty
\,{}_2\phi_1
\left(
\begin{array}{c}\beta , \beta 
e^{2i\theta}\\ \beta^2\end{array}
;q,te^{-i\theta}\right)
C_n(x;\gamma|q)\frac{w(x;\gamma)}
{\sqrt{1-x^2}} dx\\&&
=2\pi(-\beta t)^n\frac{q^{\binom n 2}
(\gamma,q\gamma;q)_\infty
(\beta,\gamma^2;q)_n}{(\gamma^2,q;q)_\infty
(q,\beta^2,q\gamma;q)_n}
{}_2\phi_5\left(\begin{array}{c}
\beta\gamma^{-1},\beta q^n\\
\gamma q^{n+1}, \pm \beta q^{n/2},
\pm \beta q^{(n+1)/2}
\end{array};q,\gamma(\beta t)^2 
q^{2n+1}\right). \nonumber
\end{eqnarray*}
\end{cor}
\begin{proof}Using the generalized generating 
function (\ref{gengenthm2}) and (\ref{ortho}),
the proof follows as above.
\end{proof}

\begin{cor}
Let $n\in\mathbb N_0,$ $\beta,\gamma\in 
(-1,1)\setminus\{0\}$, $|q|<1$, 
$|t|<1-\beta^2$. Then
\begin{eqnarray*}
\label{int2}
\int^1_{-1} 
\frac1{(te^{i\theta};q)_\infty}
&& \,{}_2\phi_1\left(\begin{array}{c}
\beta , \beta e^{2i\theta}\\\beta^2
\end{array};q,te^{-i\theta}
\right)C_n(x;\gamma|q)\frac{w(x;\gamma)}
{\sqrt{1-x^2}} dx\\ &&
=2\pi t^n \frac{(\gamma,q\gamma;q)_\infty
(\beta,\gamma^2;q)_n}{(\gamma^2,q;q)_\infty
(q,\beta^2,q\gamma;q)_n}
{}_6\phi_5\left(\begin{array}{c}
\beta\gamma^{-1},\beta q^n,0,0,0,0\\
\gamma q^{n+1}, \pm \beta q^{n/2},
\pm \beta q^{(n+1)/2}\end{array}
;q,\gamma t^2\right).\nonumber
\end{eqnarray*}
\end{cor}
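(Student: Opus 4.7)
The plan is to mimic the previous two corollaries in this section. I would start from the generalized generating function for Rogers/continuous $q$-ultraspherical polynomials established in Theorem~\ref{theo:5}, namely identity \myref{theorem3}, which already presents the left-hand side as an eigenfunction expansion of the form
\[
F(x,t) \;=\; \sum_{n=0}^\infty A_n(t)\, C_n(x;\gamma|q),
\]
where $A_n(t)$ is precisely the product of the $t^n$-factor and the ${}_6\phi_5$ appearing on the right-hand side of the corollary (divided by the appropriate $C$-polynomial squared norm factors). Multiplying both sides by $C_m(x;\gamma|q)\, w(x;\gamma)/\sqrt{1-x^2}$, with the weight $w(x;\gamma)$ given by \myref{weight}, and integrating over $(-1,1)$ collapses the series by the orthogonality relation \myref{ortho}.

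The $L^2$-justification for interchanging summation and integration has already been carried out in the opening discussion of Section~\ref{Definiteintegralsinfiniteseriesandqintegrals}: the bound \myref{Cneq} on $|C_n(x;\gamma|q)|$ together with the estimates on $a_n$ and $c_{k,n}$ appearing in the proof of Theorem~\ref{theo:5} shows that $\sum_k |d_k s_k| < \infty$ under the hypothesis $|t|<1-\beta^2$, so Lemma~\ref{lemmasum} applies with the classical linear functional given by integration against $w(x;\gamma)/\sqrt{1-x^2}$.

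After using orthogonality, only the $n=m$ term survives and contributes the normalization constant
\[
2\pi\,\frac{(1-\gamma)(\gamma,q\gamma;q)_\infty (\gamma^2;q)_n}
{(1-\gamma q^n)(\gamma^2,q;q)_\infty (q;q)_n}.
\]
Multiplying $A_n(t)$ by this factor and simplifying, the $1/(1-\gamma)$ in $A_n(t)$ cancels against $(1-\gamma)$, while the combination $(1-\gamma q^n)/(q\gamma;q)_n$ in $A_n(t)$ collapses with $(q\gamma;q)_n/(1-\gamma q^n)$ from the normalization to produce $1/(q\gamma;q)_n$, yielding exactly the coefficient $(\gamma,q\gamma;q)_\infty(\beta,\gamma^2;q)_n /[(\gamma^2,q;q)_\infty(q,\beta^2,q\gamma;q)_n]$ advertised in the statement. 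The ${}_6\phi_5$ factor is carried through unchanged.

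I do not expect any genuine obstacle: the two previous corollaries are proved by essentially this identical procedure, and all convergence issues have been absorbed into the general framework set up at the beginning of Section~\ref{Definiteintegralsinfiniteseriesandqintegrals}. The only mildly careful step is the bookkeeping of the $(1-\gamma)$ and $(1-\gamma q^n)$ factors when matching $A_n(t)$ against the squared-norm from \myref{ortho}, but this is a one-line algebraic cancellation.
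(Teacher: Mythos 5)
Your proposal is correct and is exactly the paper's proof, which consists of applying the orthogonality relation for the $C_n(x;\gamma|q)$ to the generalized generating function of Theorem~\ref{theo:5} and simplifying the surviving $n=m$ term. One small bookkeeping slip: the squared norm from \myref{ortho} contributes $(1-\gamma)\,(\gamma,q\gamma;q)_\infty(\gamma^2;q)_n/\bigl[(1-\gamma q^n)(\gamma^2,q;q)_\infty(q;q)_n\bigr]$ --- there is no $(q\gamma;q)_n$ factor in it, so the cancellation is simply $(1-\gamma q^n)$ against $1/(1-\gamma q^n)$, leaving the $1/(q\gamma;q)_n$ already present in the expansion coefficient --- and with that correction your final coefficient matches the statement.
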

\begin{proof}
We complete the proof using (\ref{theorem3}) 
and \myref{ortho}.
\end{proof}

\begin{cor}
Let $n\in\mathbb N_0,$ $\gamma\in\mathbb C$, $\alpha,\beta\in (-1,1)\setminus\{0\},$ $|q|<1$,
$|t|<1-\beta^2$. Then
\begin{eqnarray*}
\int^1_{-1}  &&
\frac{(\gamma te^{i\theta};q)_\infty}
{(te^{i\theta};q)_\infty}
 \,{}_3\phi_2 \left(
\begin{array}{c}
\gamma,\beta,\beta e^{2i\theta}\\
\beta^2,\gamma te^{i\theta}
\end{array};q,te^{-i\theta}
\right)
C_n(x;\alpha|q)\frac{w(x;\alpha)}{\sqrt{1-x^2}} dx\\
&&\hspace{0.5cm}=2\pi t^n
\frac{(\alpha,q\alpha;q)_\infty(\alpha^2,\gamma,\beta;q)_n}
{(\alpha^2,q;q)_\infty(q,\beta^2,q\alpha;q)_n}
{}_6\phi_5\left(
\begin{array}{c}
\beta/\alpha,\beta q^n,
\pm \left(\gamma q^n\right)^{1/2}
\pm \left(\gamma q^{n+1}\right)^{1/2}
\\
\alpha q^{n+1},
\pm \beta q^{n/2},
\pm \beta q^{(n+1)/2}
\end{array}
;q,\alpha t^2
\right).
\nonumber
\end{eqnarray*}
\end{cor}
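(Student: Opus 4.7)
The plan is to follow exactly the same template used in the three preceding corollaries of this subsection: take the generalized generating function from Theorem \ref{theo:6} (equation \myref{genFunc4C}), multiply both sides by the kernel $C_n(x;\alpha|q)\,w(x;\alpha|q)/\sqrt{1-x^2}$, and integrate over $(-1,1)$. Once the order of integration and summation is interchanged, the orthogonality relation \myref{ortho} collapses the sum on the right-hand side to the single term indexed by $n$, leaving exactly the claimed identity.

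To justify the interchange, I would invoke the framework already set up at the beginning of Section \ref{Definiteintegralsinfiniteseriesandqintegrals}: because the Rogers/continuous $q$-ultraspherical polynomials satisfy the polynomial growth bound \myref{Cneq}, Lemma \ref{lemmasum} gives a corresponding bound on $s_k$, and combined with the bounds on $a_n$ and $c_{k,n}$ used in the proof of Theorem \ref{theo:6}, the $L^2$-condition \myref{form-inve-L2} holds for $|t|<1-\beta^2$. This lets me pass the integral through the infinite series term by term.

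The remaining step is purely a bookkeeping computation. On the right-hand side of \myref{genFunc4C}, the coefficient of $C_k(x;\alpha|q)$ is
\[
\frac{1}{1-\alpha}\,
\frac{(\beta,\gamma;q)_k(1-\alpha q^k)t^k}{(\beta^2,q\alpha;q)_k}\,
{}_6\phi_5\!\left(\begin{array}{c}\beta/\alpha,\beta q^k,(\gamma q^k)^{1/2},\ldots\\ \alpha q^{k+1},\beta q^{k/2},\ldots\end{array};q,\alpha t^2\right).
\]
Integrating against $C_n(x;\alpha|q)\,w(x;\alpha|q)/\sqrt{1-x^2}$ and using \myref{ortho} selects $k=n$ and multiplies by
\[
2\pi\,\frac{(1-\alpha)(\alpha,q\alpha;q)_\infty(\alpha^2;q)_n}{(1-\alpha q^n)(\alpha^2,q;q)_\infty(q;q)_n}.
\]
The factors $(1-\alpha)$ and $(1-\alpha q^n)$ cancel, combining the $(\beta,\gamma;q)_n$ with $(\alpha^2;q)_n/(q;q)_n$ produces the numerator $(\alpha^2,\gamma,\beta;q)_n$ and denominator $(q,\beta^2,q\alpha;q)_n$ displayed in the corollary, while the infinite-product factor $(\alpha,q\alpha;q)_\infty/(\alpha^2,q;q)_\infty$ matches exactly. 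The ${}_6\phi_5$ at index $n$ survives unchanged.

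I do not anticipate any real obstacle: the analytic justification for the interchange is already packaged in Lemma \ref{lemmasum} together with the Cauchy-style estimates at the end of the proof of Theorem \ref{theo:6}, and the algebraic reduction is a mechanical cancellation of Pochhammer factors. The only mildly delicate point is making sure the parameter restriction $\alpha\in(-1,1)\setminus\{0\}$ suffices for both \myref{ortho} to be valid and for the hypotheses of Theorem \ref{theo:6} to apply; both hold under the assumed conditions, so the proof reduces to a one-paragraph appeal to the generating function, the orthogonality relation, and the normalization bookkeeping above.
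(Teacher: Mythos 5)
Your proposal is correct and follows essentially the same route as the paper: the paper's proof is exactly the one-line application of the generalized generating function \myref{genFunc4C} from Theorem \ref{theo:6} together with the orthogonality relation \myref{ortho}, with the interchange of sum and integral justified by the $L^2$-framework and Lemma \ref{lemmasum} set up at the start of Section \ref{Definiteintegralsinfiniteseriesandqintegrals}. Your explicit Pochhammer bookkeeping, including the cancellation of $(1-\alpha)$ against $(1-\alpha q^n)$, matches the stated right-hand side exactly.
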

\begin{proof}
We complete the proof using
(\ref{genFunc4C})
and \myref{ortho}.
\end{proof}

\begin{cor} \label{cor:22}
Let $n\in\mathbb N_0,$ $\beta,\gamma\in (-1,1)\setminus\{0\},$ $|q|<1$,
$|t|<\min\{(1-\beta^2) (1+\sqrt{q}|\beta|)(1-q|\gamma|),1\}$.
Then
\begin{eqnarray*}
&&\hspace{0.0cm}\int^1_{-1}{}_2\phi_1
\left(
\begin{array}{c}
\pm \beta^{1/2}e^{i\theta}
\\
-\beta
\end{array}
;
q,te^{-i\theta}
\right)
\,{}_2\phi_1
\left(
\begin{array}{c}
\pm (q\beta)^{1/2}e^{-i\theta}
\\
-q\beta
\end{array}
;
q,te^{i\theta}
\right)
C_n(x;\gamma|q)\frac{w(x;\gamma)}{\sqrt{1-x^2}} dx\\
&&=2\pi t^n
\frac{(\gamma,q\gamma;q)_\infty(\gamma^2,\beta,
\pm \beta q^{1/2}
;q)_n}{(\gamma^2,q;q)_\infty(\beta^2,-q\beta,q\gamma,q;q)_n}\\[0.2cm]
&&\times\,{}_{10}\phi_9\Bigg(
\begin{array}{c}
\beta\gamma^{-1},\beta q^n,
\pm (\beta q^{n+1/2})^{1/2},
\pm (\beta q^{n+3/2})^{1/2},
\pm i(\beta q^{n+1/2})^{1/2},
\pm i(\beta q^{n+{3/2}})^{1/2}
\\
\gamma q^{n+1},
\pm \beta q^{n/2},
\pm \beta q^{(n+1)/2},
\pm i(\beta q^{n+1})^{1/2},
\pm i(\beta q^{n+2})^{1/2}
\end{array}
; q,\gamma t^2 \Bigg).
\end{eqnarray*}
\end{cor}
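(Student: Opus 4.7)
The plan is to mimic the argument pattern used in the preceding four corollaries: start from the generalized generating function already proved in Theorem \ref{theo:7} (equation \eqref{genFunthm5}), multiply both sides by
$$
C_n(x;\gamma|q)\,\frac{w(x;\gamma)}{\sqrt{1-x^2}},
$$
where $w(x;\gamma)$ is the weight function defined in \eqref{weight}, and integrate term-by-term over $(-1,1)$ against the Rogers/continuous $q$-ultraspherical orthogonality relation \eqref{ortho}. Under the hypothesis $|t|<\min\{(1-\beta^2)(1+\sqrt{q}|\beta|)(1-q|\gamma|),1\}$ inherited from Theorem \ref{theo:7}, the series on the right-hand side of \eqref{genFunthm5} converges, and the estimates of the form $|C_k(x;\gamma|q)|\le (k+1)^{\sigma_4}$ from \eqref{Cneq} together with Lemma \ref{lemmasum} and the discussion surrounding \eqref{form-inve-L2} justify interchanging the integral with the infinite sum; this is the place where the restricted range of $|t|$ is essential.

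After interchange, the orthogonality relation \eqref{ortho} collapses the sum to the single term indexed by $n$, contributing the factor
$$
2\pi\,\frac{(1-\gamma)(\gamma,q\gamma;q)_\infty(\gamma^2;q)_n}{(1-\gamma q^n)(\gamma^2,q;q)_\infty(q;q)_n}.
$$
Multiplying this by the coefficient of $C_n(x;\gamma|q)$ on the right-hand side of \eqref{genFunthm5}, namely
$$
\frac{1}{1-\gamma}\,\frac{(\beta,\beta q^{1/2},-\beta q^{1/2};q)_n(1-\gamma q^n)t^n}{(\beta^2,-q\beta,q\gamma;q)_n},
$$
one observes that the factors $(1-\gamma)$ and $(1-\gamma q^n)$ cancel, yielding precisely the prefactor
$$
2\pi t^n\,\frac{(\gamma,q\gamma;q)_\infty(\gamma^2,\beta,\beta q^{1/2},-\beta q^{1/2};q)_n}{(\gamma^2,q;q)_\infty(\beta^2,-q\beta,q\gamma,q;q)_n}
$$
stated in the corollary. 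The ${}_{10}\phi_9$ factor from \eqref{genFunthm5} is independent of $x$ and carries over unchanged.

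The only non-routine step is verifying the $L^2$ convergence required by \eqref{form-inve-L2}, but this is exactly what the machinery preceding Subsection 6.1 was set up to handle, using the polynomial bound on $|C_n(x;\gamma|q)|$ from \eqref{Cneq} and the bound on the connection coefficients $c_{k,n}$ already used in the proof of Theorem \ref{theo:7}. I expect no hidden obstacle: the result is essentially a direct specialization of the general scheme described in Section \ref{Definiteintegralsinfiniteseriesandqintegrals}, with the combinatorial simplification of the prefactor being the only thing to check by hand.
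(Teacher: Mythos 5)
Your proposal is correct and is essentially the paper's own proof, which likewise multiplies the generalized generating function (\ref{genFunthm5}) of Theorem \ref{theo:7} by $C_n(x;\gamma|q)\,w(x;\gamma)/\sqrt{1-x^2}$ and integrates term-by-term using the orthogonality relation (\ref{ortho}), with the interchange justified by the $L^2$ framework of Section \ref{Definiteintegralsinfiniteseriesandqintegrals}. Your explicit cancellation of the factors $(1-\gamma)$ and $(1-\gamma q^n)$ and verification of the resulting prefactor are exactly the computation the paper leaves implicit.
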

\begin{proof}
We complete the proof using (\ref{genFunthm5}) and \myref{ortho}.
\end{proof}

\begin{cor} \label{cor:23}
Let $n\in\mathbb N_0,$ $\beta,\gamma\in (-1,1)\setminus\{0\},$ $|q|<1$,
$|t|<\min\{(1-\beta^2)(1+\sqrt{q}|\beta|)(1-q|\gamma|),1\}$.
Then
\begin{eqnarray*}
\int^1_{-1}&&{}_2\phi_1
\left(
\begin{array}{c} \beta^{1/2}e^{i\theta},
(q\beta)^{1/2}e^{i\theta}\\ \beta q^{1/2}
\end{array};q,te^{-i\theta}\right)
\,{}_2\phi_1\left(\begin{array}{c}
-\beta^{1/2}e^{-i\theta},-(q\beta)^{1/2}
e^{-i\theta}\\\beta q^{1/2}\end{array};
q,te^{i\theta}\right) \\
&&\times 
C_n(x;\gamma|q)\frac{w(x;\gamma)}
{\sqrt{1-x^2}} dx=2\pi t^n \frac
{(\gamma,q\gamma;q)_\infty(\gamma^2,
\pm\beta,-\beta q^{1/2};q)_n}
{(\gamma^2,q;q)_\infty(\beta^2,\beta q^{1/2},q\gamma,q;q)_n}\\[0.2cm]
&&\times\,{}_{10}\phi_9\Bigg(
\begin{array}{c}
\beta\gamma^{-1},\beta q^n,
\pm i(\beta q^n)^{1/2},
\pm i(\beta q^{n+1})^{1/2},
\pm i(\beta q^{n+1/2})^{1/2}
\pm i(\beta q^{n+3/2})^{1/2}
\\
\gamma q^{n+1},
\pm \beta q^{n/2},
\pm \beta q^{(n+1)/2},
\pm (\beta q^{n+1/2})^{1/2},
\pm (\beta q^{n+3/2})^{1/2}
\end{array}
;
q,\gamma t^2
\Bigg).
\end{eqnarray*}
\end{cor}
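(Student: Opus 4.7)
My plan is to mimic the template used for Corollary cor:22 (and indeed for every integral corollary in this section), specialized to the generating function in Theorem theo:8. The starting point is the generalized generating function \myref{genFunc1C}, which already expresses the product of the two ${}_2\phi_1$ factors on the left as an expansion in the Rogers/continuous $q$-ultraspherical polynomials $C_n(x;\gamma|q)$ with explicitly written ${}_{10}\phi_9$ coefficients. The goal is to multiply both sides by $C_n(x;\gamma|q)\,w(x;\gamma|q)/\sqrt{1-x^2}$ with $w$ as in \myref{weight}, integrate over $(-1,1)$, and apply orthogonality \myref{ortho} to collapse the sum on the right to the single $n$-th term.

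Carrying this out, the right-hand side sum over $m$ picks out only $m=n$ via the Kronecker delta in \myref{ortho}, and the squared norm $2\pi(1-\gamma)(\gamma,q\gamma;q)_\infty(\gamma^2;q)_n/[(1-\gamma q^n)(\gamma^2,q;q)_\infty(q;q)_n]$ multiplies into the coefficient of $C_n$ from Theorem theo:8. The prefactor $(1-\gamma q^n)/(1-\gamma)$ coming from the expansion cancels the corresponding $(1-\gamma q^n)$ factor from the squared norm, and the $(q\gamma;q)_n$ factor present in the Theorem's coefficient combines with the $(q;q)_n$ factor from the norm exactly as in Corollary cor:22. The ${}_{10}\phi_9$ factor is carried along unchanged. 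A quick bookkeeping check against Corollary cor:22 (the analogous integral for Theorem theo:7, which has the same structure) confirms that the Pochhammer factors in the stated right-hand side match the expected shape.

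The only nontrivial step is justifying that the orthogonality relation may be interchanged with the infinite sum defining the generalized generating function. For this I will invoke the framework set up in Section \ref{Definiteintegralsinfiniteseriesandqintegrals}, in particular Lemma \ref{lemmasum} combined with the bound $|C_k(x;\gamma|q)|\le (k+1)^{\sigma_4}$ from \myref{Cneq} and the bound on $|a_n|$ recorded in the proof of Theorem theo:8, namely $|a_n|\le |t|^n/[(1-\beta^2)^n(1+\sqrt{q}|\beta|)^n(1-q|\gamma|)^n]$. The hypothesis $|t|<\min\{(1-\beta^2)(1+\sqrt{q}|\beta|)(1-q|\gamma|),1\}$ then ensures absolute convergence of $\sum_n |a_n|\sum_{k\le n/2} |c_{k,n} s_k|$, which in turn yields \eqref{form-inve-L2} and therefore legitimizes the term-by-term integration.

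The main obstacle is essentially clerical rather than conceptual: verifying that the combination of factors on the right side of \myref{genFunc1C} with the $q$-ultraspherical squared norm from \myref{ortho} simplifies to the displayed Pochhammer products $(\gamma,q\gamma;q)_\infty(\gamma^2,\beta,-\beta,-\beta q^{1/2};q)_n/[(\gamma^2,q;q)_\infty(\beta^2,\beta q^{1/2},q\gamma,q;q)_n]$, and that the long ${}_{10}\phi_9$ numerator and denominator parameter lists transcribe correctly. Once this symbolic matching is carried out, no further analytic input is required.
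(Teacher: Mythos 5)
Your proposal is correct and follows exactly the paper's own route: multiply the generalized generating function \myref{genFunc1C} from Theorem \ref{theo:8} by $C_n(x;\gamma|q)\,w(x;\gamma)/\sqrt{1-x^2}$, integrate over $(-1,1)$, and apply the orthogonality relation \myref{ortho} to isolate the $n$-th term, with the interchange justified by the Section \ref{Definiteintegralsinfiniteseriesandqintegrals} framework (Lemma \ref{lemmasum} and the bounds \myref{Cneq} and on $|a_n|$). Your bookkeeping of the norm factors, including the cancellation of $(1-\gamma q^n)/(1-\gamma)$ against the squared norm, reproduces the stated right-hand side, so nothing further is needed.
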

\begin{proof}
We complete the proof using (\ref{genFunc1C}) and \myref{ortho}.
\end{proof}

\begin{cor} \label{cor:24}
Let $n\in\mathbb N_0$ $\beta,\gamma\in (-1,1)\setminus\{0\},$ $|q|<1$, 
$|t|<\min\{(1-\beta^2)(1+\sqrt{q}|\beta|),1\}$.
Then
\begin{eqnarray*}
\int^1_{-1}&&{}_2\phi_1
\left(\begin{array}{c} \beta^{1/2}
e^{i\theta},-(q\beta)^{1/2}e^{i\theta}\\
-\beta q^{1/2}\end{array};q,te^{-i\theta}
\right)
\,{}_2\phi_1\left(\begin{array}{c}
(q\beta)^{1/2}e^{-i\theta},-\beta^{1/2}
e^{-i\theta}\\-\beta q^{1/2}\end{array};
q,te^{i\theta}\right)
\\&&
\times C_n(x;\gamma|q)\frac{w(x;\gamma)}
{\sqrt{1-x^2}} dx=2\pi t^n\frac{(\gamma,
q\gamma;q)_\infty(\gamma^2,\pm \beta
\beta q^{1/2};q)_n}{(\gamma^2,q;q)_\infty
(\beta^2,-\beta q^{1/2},q\gamma,q;q)_n}
\\[0.2cm] &&\times\,{}_{10}\phi_9\Bigg(
\begin{array}{c}
\beta\gamma^{-1},\beta q^n,
\pm i(\beta q^n)^{1/2},
\pm i(\beta q^{n+1})^{1/2},
\pm (\beta q^{n+1/2})^{1/2},
\pm (\beta q^{n+3/2})^{1/2},\\
\gamma q^{n+1},
\pm \beta q^{n/2},
\pm \beta q^{(n+1)/2},
\pm i(\beta q^{n+1/2})^{1/2},
\pm i(\beta q^{n+3/2})^{1/2},
\end{array};q,\gamma t^2\Bigg).
\end{eqnarray*}
\end{cor}
\begin{proof}
We complete the proof using 
(\ref{genFunc3C}) and \myref{ortho}.
\end{proof}

\subsubsection{$q$-Laguerre polynomials}

The continuous orthogonality relation 
for $q$-Laguerre
polynomials is given by the following 
result.  Notice that this result appears 
in \cite[Section 2]{Christiansen}.
\begin{prop} \label{theo:25}
Let $\alpha\in(-1,\infty)$, 
$m,n\in\mathbb N_0$, $|q|<1$. 
Then
\begin{eqnarray}
&& \hspace{-0.5cm}\int_0^\infty 
L_m^{(\alpha)}(x;q) L_n^{(\alpha)}(x;q)
\frac{x^\alpha}{(-x;q)_\infty} dx
=-\frac{\delta_{m,n}}{q^n}
\left\{ \begin{array}{l@{\ \mathrm{if}\, }l}
\displaystyle \frac{\pi(q^{-\alpha}
;q)_\infty(q^{\alpha+1};q)_n}
{\sin(\pi\alpha)(q;q)_\infty(q;q)_n},
&\alpha\in(-1,\infty)\setminus\mathbb N_0, 
\\[0.6cm] \displaystyle \frac
{(q^{n+1};q)_\alpha\log q}
{q^{\alpha(\alpha+1)/2}},
&\alpha\in\mathbb N_0.
\end{array}\right.
\label{HSC1}
\end{eqnarray}
\end{prop}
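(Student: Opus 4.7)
The plan is to prove the orthogonality \myref{HSC1} by combining a Rodrigues-type representation for the $q$-Laguerre polynomials with repeated $q$-integration by parts, and then reducing the remaining integral to Ramanujan's classical $q$-beta integral
\[
\int_0^\infty \frac{x^{s-1}}{(-x;q)_\infty}\,dx = -\frac{\pi}{\sin(\pi s)}\,\frac{(q^{1-s};q)_\infty}{(q;q)_\infty},\qquad 0<\Re s <1,
\]
extended meromorphically in $s$. Taking $s=\alpha+1$ already yields the $m=n=0$ case and matches the first branch of \myref{HSC1}.

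For general $m\le n$, I would substitute the Rodrigues-type identity from Koekoek {\it et al.}~(2010) \cite[(14.21.10)]{Koekoeketal}, which expresses $x^\alpha L_n^{(\alpha)}(x;q)/(-x;q)_\infty$ as a constant multiple of the $n$-th $q$-derivative of $x^{\alpha+n}/(-x;q)_\infty$, into the integrand. Then I would carry out $n$ successive $q$-integrations by parts to transfer the $q$-derivatives onto $L_m^{(\alpha)}(x;q)$, checking at each step that the boundary contributions at $0$ and $\infty$ vanish thanks to the factors $x^{\alpha+j}$ and the decay of $1/(-x;q)_\infty$. When $m<n$, the $n$-th $q$-derivative of a polynomial of degree $m$ is identically zero and orthogonality follows. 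When $m=n$, only the leading coefficient of $L_n^{(\alpha)}$ survives and one is left with the moment $\int_0^\infty x^{2n+\alpha}/(-x;q)_\infty\,dx$, which Ramanujan's integral evaluates explicitly. Simplifying the result with the $q$-Pochhammer identities collected in Section~2 produces the stated squared norm.

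For the integer case $\alpha=k\in\mathbb N_0$, both $\sin(\pi\alpha)$ and $(q^{-\alpha};q)_\infty$ vanish simultaneously, so a limiting argument via L'H\^opital is required. Using $(q^{-k};q)_k=(-1)^kq^{-k(k+1)/2}(q;q)_k$ gives
\[
\lim_{\alpha\to k}\frac{(q^{-\alpha};q)_\infty}{\sin(\pi\alpha)} = \frac{(q;q)_k(q;q)_\infty\,\log q}{\pi\,q^{k(k+1)/2}},
\]
and combining this with the elementary identity $(q;q)_k(q^{k+1};q)_n=(q;q)_n(q^{n+1};q)_k$ reproduces the second branch of \myref{HSC1}.

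The main obstacle will be justifying the $q$-integration by parts inside an ordinary Riemann integral on $(0,\infty)$, since this device is most naturally formulated for Jackson ($q$-)integrals rather than for continuous ones. I expect to handle it either by unfolding the $q$-derivative into its defining difference quotient and applying Abel summation together with dominated convergence, or by identifying the continuous integral with an auxiliary Jackson integral through a suitable change of variable. Once this analytic step is in place, the remaining computations are routine manipulations of $q$-shifted factorials combined with Ramanujan's integral.
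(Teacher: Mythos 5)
Your strategy is workable but substantially heavier than, and genuinely different from, what the paper does. The paper does not re-derive the orthogonality at all: it quotes the known continuous orthogonality relation \cite[(14.21.2)]{Koekoeketal}, whose right-hand side carries the factor $\Gamma(-\alpha)\Gamma(\alpha+1)$, rewrites that factor as $-\pi/\sin(\pi\alpha)$ by the reflection formula \cite[(5.5.3)]{NIST}, and then obtains the $\alpha\in\mathbb N_0$ branch by precisely the limit you state,
$\lim_{\alpha\to k}(q^{-\alpha};q)_\infty/\sin(\pi\alpha)=(q;q)_\infty(q;q)_k\log q/(\pi q^{k(k+1)/2})$
(taken from Askey \cite[cf.~(2.9)]{Askey80}), followed by the same rearrangement $(q;q)_k(q^{k+1};q)_n=(q;q)_n(q^{n+1};q)_k$. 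So your integer-$\alpha$ treatment coincides with the paper's, while your first part re-proves the Moak-type orthogonality from scratch via the Rodrigues formula, $q$-integration by parts, and Ramanujan's $q$-beta integral. That route buys self-containedness (you never need to cite the gamma-function form of the norm), at the cost of considerable extra computation that the paper simply outsources to \cite{Koekoeketal} --- and indeed the proposition itself is noted in the paper to appear already in \cite[\S 2]{Christiansen}. Your appeal to meromorphic continuation to reach the moment $s=2n+\alpha+1$ outside the strip $0<\Re s<1$ is legitimate, since the integral is analytic for $\Re s>0$ (superpolynomial decay of $1/(-x;q)_\infty$) and agrees with the right-hand side on the strip.

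Two corrections. First, a sign slip: as written your Ramanujan integral cannot be right, since for $0<\Re s<1$ both sides should be positive; the correct statement is
$\int_0^\infty x^{s-1}/(-x;q)_\infty\,dx=+\dfrac{\pi}{\sin(\pi s)}\dfrac{(q^{1-s};q)_\infty}{(q;q)_\infty}$,
and the minus sign in \myref{HSC1} emerges only after setting $s=\alpha+1$ and using $\sin(\pi(\alpha+1))=-\sin(\pi\alpha)$. With the sign you wrote, the $m=n=0$ check you claim ``matches the first branch'' would in fact fail by a factor of $-1$. Second, the analytic obstacle you anticipate is largely illusory for the continuous integral: writing $({\mathscr D}_q f)(x)=(f(x)-f(qx))/((1-q)x)$ and substituting $x\mapsto x/q$ in the second term gives
$\int_0^\infty ({\mathscr D}_q f)(x)\,g(x)\,dx=-q^{-1}\int_0^\infty f(x)\,({\mathscr D}_{1/q}g)(x)\,dx$
with no boundary terms at all --- it is an honest change of variables on $(0,\infty)$ --- provided each piece converges absolutely, which your integrands $x^{\alpha+j}/(-x;q)_\infty$ times polynomials guarantee. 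No Abel summation, dominated-convergence gymnastics, or auxiliary Jackson integral is needed, so this step of your plan goes through routinely.
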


\begin{proof}
The continuous orthogonality relation 
for the $q$-Laguerre polynomials is 
given in \cite[(14.21.2)]{Koekoeketal} 
with the right-hand side expressed in 
terms of gamma functions, namely
\[
\int_0^\infty L_m^{(\alpha)}(x;q) 
L_n^{(\alpha)}(x;q) \frac{x^\alpha}
{(-x;q)_\infty} dx=\frac{(q^{-\alpha}
;q)_\infty (q^{\alpha+1};q)_n}
{q^n(q;q)_\infty(q;q)_n}
\Gamma(-\alpha)\Gamma(\alpha+1)
\delta_{m,n}.
\]
The gamma functions can be replaced using 
the reflection formula \cite[(5.5.3)]{NIST:DLMF} 
and the result is given in the theorem for
$\alpha\in(-1,\infty)\setminus\mathbb N_0$.
The result for $\alpha\in\mathbb N_0$ is 
a consequence of (\ref{2:3}) and 
\cite[cf. (2.9)]{Askey80}, namely
\[
\lim_{\alpha\to k} \frac{(q^{1-\alpha}
;q)_\infty}{\sin(\pi\alpha)(aq^{-\alpha}
;q)_\infty}
=\frac{-(q;q)_\infty (q;q)_{k-1} \log q}
{\pi q^{\binom k2} (a;q)_\infty (a;q^{-1})_k},
\]
which leads to
\[
\lim_{\alpha\to k} \frac
{(q^{-\alpha};q)_\infty}
{\sin(\pi\alpha)}
=\frac{(q;q)_\infty (q;q)_k \log q}
{\pi q^{k(k+1)/2}}.
\]
Applying this limit completes the proof.
\end{proof}

\begin{cor} \label{cor:26}
Let $n\in\mathbb N_0$, $\alpha,\beta\in 
(-1,\infty),$ $|q|<1$,  $|t|<(1-q^{\alpha
+1})(1-q)$. Then
\begin{eqnarray*}
&&\int_0^\infty \qhyp01-{q^{\alpha+1}}{q,-xtq^{\alpha+1}}L_n^{(\beta)}(x;q)
\frac{x^{\beta}}{(-x;q)_\infty}dx
\\[0.2cm]
&&\hspace{1.3cm}
=\frac{-\left(tq^{\alpha-\beta}\right)^n(t;q)_\infty}
{q^n(q^{\alpha+1};q)_n}
\qhyp21{q^{\alpha-\beta},0}{q^{\alpha+n+1}}{q,t}
\left\{ \begin{array}{l@{\, \mathrm{if}\,}l}
\displaystyle \frac{\pi(q^{-\beta};q)_\infty(q^{\beta+1};q)_n}
{\sin(\pi\beta)(q;q)_\infty(q;q)_n},
&  \beta\in(-1,\infty)\setminus\mathbb N_0, \\[0.6cm]
\displaystyle \frac{(q^{n+1};q)_\beta\log q}{q^{\beta(\beta+1)/2}},
&  \beta\in\mathbb N_0.
\end{array}\right.
\end{eqnarray*}
\end{cor}
\begin{proof}
Using (\ref{HSC1}) with (\ref{qLag1})
completes the proof.
\end{proof}

\begin{cor}\label{cor:27}
Let $n\in\mathbb N_0,$ $\alpha,\beta\in (-1,\infty),$ $|q|<1$, 
$|t|<(1-q^{\alpha+1})(1-q)$. Then
\begin{eqnarray*}
&&\hspace{-0.2cm}\int_0^\infty \qhyp02-{q^{\alpha+1},t}{q,-xtq^{\alpha+1}}
L_n^{(\beta)}(x;q)\frac{x^{\beta}}{(-x;q)_\infty}dx \\[0.2cm]
&&\hspace{0.4cm}
=\frac{-\left(-tq^{\alpha-\beta}\right)^n}
{q^n(t;q)_\infty(q^{\alpha+1};q)_n}
\qhyp11{q^{\alpha-\beta}}{q^{\alpha+n+1}}{q,tq^n}
\left\{ \begin{array}{l@{\ \mathrm{if}\,}l}
\displaystyle \frac{\pi(q^{-\beta};q)_\infty(q^{\beta+1};q)_n}
{\sin(\pi\beta)(q;q)_\infty(q;q)_n},
& \beta\in(-1,\infty)\setminus\mathbb N_0, \\[0.6cm]
\displaystyle \frac{(q^{n+1};q)_\beta\log q}{q^{\beta(\beta+1)/2}},
& \beta\in\mathbb N_0.
\end{array}\right.
\end{eqnarray*}
\end{cor}
\begin{proof}
Using (\ref{HSC1}) with (\ref{qLag2})
completes the proof.
\end{proof}

\begin{cor}\label{cor:28}
Let $n\in\mathbb N_0,$ $\alpha,\beta\in (-1,\infty),$ $\gamma\in\mathbb C$, 
$|q|<1$, $|t|<1-q$.
Then
\begin{eqnarray*}
&&\int_0^\infty \qhyp12{\gamma}{q^{\alpha+1},\gamma t}{q,-xtq^{\alpha+1}}
L_n^{(\beta)}(x;q)\frac{x^{\beta}}{(-x;q)_\infty}dx
=\frac{-\left(tq^{\alpha-\beta}\right)^n
(t;q)_\infty(\gamma;q)_n}{q^n(\gamma t
;q)_\infty(q^{\alpha+1};q)_n}
\\[0.2cm] &&\hspace{4.3cm}\times
\qhyp21{q^{\alpha-\beta},\gamma q^n}{q^{\alpha+n+1}}{q,t}
\left\{ \begin{array}{l@{\, \mathrm{if}\,}l}
\displaystyle \frac{\pi(q^{-\beta};q)_\infty(q^{\beta+1};q)_n}
{\sin(\pi\beta)(q;q)_\infty(q;q)_n},
&  \beta\in(-1,\infty)\setminus\mathbb N_0, \\[0.6cm]
\displaystyle \frac{(q^{n+1};q)_\beta\log q}{q^{\beta(\beta+1)/2}},
& \beta\in\mathbb N_0.
\end{array}\right.
\end{eqnarray*}
\end{cor}
\begin{proof}
Using (\ref{HSC1}) with (\ref{qLag3})
completes the proof.
\end{proof}

\subsubsection{Askey-Wilson polynomials}
 
The orthogonality relation for the Askey-Wilson polynomials is given
by \cite[(14.1.2)]{Koekoeketal}
\begin{equation}
\label{orthoAW}
\int_{-1}^1 p_m(x;{\bf a}|q) p_n(x;{\bf a}|q)\frac{w(x;{\bf a};q)}{\sqrt{1-x^2}}dx
=2\pi h_n({\bf a};q) \delta_{m,n},
\end{equation}
where ${\bf a}:=\{a,b,c,d\}$, $w:[-1,1]\to[0,\infty)$ is defined by
\begin{equation}
w(x;{\bf a};q):=\left|
\frac{(e^{2i\theta};q)_\infty}
{(ae^{i\theta},be^{i\theta},ce^{i\theta},de^{i\theta};q)_\infty}
\right|^2,
\label{weightAW}
\end{equation}
and
\[
h_n({\bf a};q):=
\frac{(abcdq^{n-1};q)_n(abcdq^{2n};q)_\infty}
{(q^{n+1},abq^n,acq^n,adq^n,bcq^n,bdq^n,cdq^n;q)_\infty}.
\]
\begin{cor}
Let $n\in\mathbb N_0,$ $\alpha,a,b,c,d\in\mathbb R$ or occur in complex conjugate
pairs if complex, and $\max(|\alpha|,|a|,|b|,|c|,|d|)<1,$ $|q|<1$, $|t|<(1-q)^3$. Then
\begin{eqnarray*}
\hspace{-0.3cm}
&&\int_{-1}^1\qhyp21{ae^{i\theta},be^{i\theta}}{ab}{q,te^{-i\theta}}
\qhyp21{ce^{-i\theta},de^{-i\theta}}{cd}{q,te^{i\theta}}
p_n(x;\alpha,b,c,d|q)\frac{w(x;\alpha,b,c,d;q)}{\sqrt{1-x^2}}dx \nonumber\\
&&\hspace{2cm}=
\frac{2\pi t^n(\alpha bcdq^{2n};q)_\infty
(\pm \sqrt{abcd/q},
\pm \sqrt{abcd},
;q)_n}
{(q^{n+1},\alpha bq^n,\alpha cq^n,\alpha dq^n,bcq^n,bdq^n,cdq^n;q)_\infty
(q,ab,cd,abcd/q;q)_n}\nonumber\\
&&\hspace{4cm}\times\qhyp43{a/\alpha,bcq^n,bdq^n,abcdq^{2n-1}}
{abq^n,abcdq^{n-1},\alpha bcdq^{2n}}{q,\alpha t}.
\end{eqnarray*}
\end{cor}
\begin{proof}We begin with the generalized generating function
\myref{awthm},
multiply both sides by
\[
p_n(x;\alpha,b,c,d|q)\frac{w(x;\alpha,b,c,d;q)}
{\sqrt{1-x^2}},
\]
where $w(x;\alpha,b,c,d;q)$ is obtained from \myref{weightAW},
and integrate over $(-1,1)$ using the orthogonality relation (\ref{orthoAW}),
producing the desired result.
\end{proof}

\subsection{Infinite series}

\subsubsection{Little $q$-Laguerre/Wall polynomials}

The little $q$-Laguerre/Wall polynomials satisfy
a discrete orthogonality relation, namely
\cite[(14.20.2)]{Koekoeketal}
\[
\sum_{k=0}^\infty
p_m(q^k;a|q)
p_n(q^k;a|q)
\frac{(aq)^k}{(q;q)_k}
=\frac{(aq)^n(q;q)_n}{(aq;q)_\infty(aq;q)_n}
\delta_{m,n},
\]
for $a\in(0,1/q)$, with $|q|<1$.

\begin{cor} \label{cor:29}
Let $n\in\mathbb N_0,$ $|q|<1$, $\alpha,\beta\in(0,q^{-1})$, 
$|t|<\min\{(1-\beta^2)(1+\sqrt{q}|\beta|),1\}$.
Then
\begin{eqnarray*}
&& \sum_{k=0}^\infty \frac{(q\beta)^k}
{(tq^k;q)_\infty}
\qhyp01{-}{q\alpha}{q,t\alpha q^{k+1}}
\dfrac{p_n\left(q^k;\beta|q\right)}{(q;q)_k}=
\frac{q^{\binom n 2}(-q\beta t)^n}{(t,q\beta;q)_\infty(q\alpha;q)_n}\qhyp11
{\alpha/\beta}{\alpha q^{n+1}}{q,t\beta q^{n+1}}.
\end{eqnarray*}
\end{cor}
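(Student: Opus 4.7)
The plan is to combine the generalized generating function from Theorem~\ref{theo:11} with the discrete orthogonality relation \eqref{discreteorthoglittleqLag} for the little $q$-Laguerre/Wall polynomials, exactly in the spirit of the preceding corollaries of Section~\ref{Definiteintegralsinfiniteseriesandqintegrals}. First I would relabel the parameters in \eqref{genlFunc} by $a\mapsto\alpha$, $b\mapsto\beta$, so that the polynomials appearing on the right of Theorem~\ref{theo:11} read $p_m(x;\beta|q)$. Next I would specialize $x=q^k$, multiply both sides by the weight factor $\dfrac{(\beta q)^k}{(q;q)_k}\,p_n(q^k;\beta|q)$, and sum over $k\in\mathbb N_0$.

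Applying \eqref{discreteorthoglittleqLag} with $a\mapsto\beta$ (valid since $0<\beta q<1$) collapses the $m$-sum on the right to the single term $m=n$, producing
$$
\frac{q^{n\choose 2}(-t)^n(\beta q;q)_n}{(q;q)_n(\alpha q;q)_n}\cdot\frac{(\beta q)^n(q;q)_n}{(\beta q;q)_\infty(\beta q;q)_n}\,\qhyp11{\alpha/\beta}{\alpha q^{n+1}}{q,\beta q^{n+1}t},
$$
which simplifies to $\dfrac{q^{n\choose 2}(-q\beta t)^n}{(\beta q;q)_\infty(\alpha q;q)_n}\,\qhyp11{\alpha/\beta}{\alpha q^{n+1}}{q,\beta q^{n+1}t}$ after the cancellation of $(\beta q;q)_n$ and $(q;q)_n$. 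On the left, the substitution $x=q^k$ pulls out an overall factor $(t;q)_\infty$ (from $(t;q)_\infty/(tq^k;q)_\infty$) and turns the argument of the ${}_0\phi_1$ into $t\alpha q^{k+1}$, so that after dividing by $(t;q)_\infty$ one recovers exactly the series in the statement.

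The only step requiring care is the interchange of the $k$-summation with the inner $m$-summation coming from the generating function. This is precisely the $L^2$-convergence issue addressed at the start of Section~\ref{Definiteintegralsinfiniteseriesandqintegrals}: the polynomial bound \eqref{lqlageq}, together with Lemma~\ref{lemmasum} and the polynomial-type growth estimates on the connection coefficients $c_{n,j;\alpha,\beta}$ from \eqref{ConCoef3} used in the proof of Theorem~\ref{theo:11}, yield \eqref{form-inve-L2}, so Fubini applies under the hypotheses on $t$. I anticipate no substantive obstacle beyond this bookkeeping; once the interchange is legitimate, the corollary is a one-line consequence of \eqref{discreteorthoglittleqLag}.
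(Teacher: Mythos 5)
Your proposal is correct, and the computation checks out line by line: specializing Theorem \ref{theo:11} with $a\mapsto\alpha$, $b\mapsto\beta$, setting $x=q^k$, multiplying by $(\beta q)^k p_n(q^k;\beta|q)/(q;q)_k$, summing over $k$, and collapsing the inner sum via the discrete orthogonality \myref{discreteorthoglittleqLag} gives, after cancelling $(\beta q;q)_n$ and $(q;q)_n$ and dividing out the factor $(t;q)_\infty$ pulled from $(t;q)_\infty/(tq^k;q)_\infty$, exactly the stated right-hand side. What is worth knowing is that this is \emph{not} what the paper's printed proof does: the proof of Corollary \ref{cor:29} in the paper cites the Rogers/continuous $q$-ultraspherical generating function \myref{genFunc3C} and the continuous orthogonality \myref{ortho} --- neither of which involves the little $q$-Laguerre/Wall polynomials at all --- and even retains the stray editorial remark ``This orthogonality isn't there.'' In other words, the paper's own proof is garbled, and your blind reconstruction supplies the ingredients (Theorem \ref{theo:11} plus \myref{discreteorthoglittleqLag}) that the authors evidently intended, in the same generating-function-plus-orthogonality pattern as the neighboring corollaries. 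One further point you could have flagged: the hypothesis ``$|t|<|t|<\min\{(1-\beta^2)(1+\sqrt{q}|\beta|),1\}$'' in the statement is itself a copy-paste artifact from the Rogers corollaries; the natural condition inherited from Theorem \ref{theo:11} (with $a\mapsto\alpha$) is $|t|<\min\{(1-q)(1-\alpha q)/\alpha,\,1\}$, and that is the condition your Fubini-type justification via \myref{lqlageq} and Lemma \ref{lemmasum} actually uses, so stating it explicitly would have made your convergence argument fully self-contained.
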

\begin{proof}
We begin with the generalized generating function (\ref{genFunc3C}) and
using \myref{ortho} completes the proof.  This orthogonality isn't there.
\end{proof}

\subsubsection{$q$-Laguerre polynomials}

One type of discrete orthogonality that the $q$-Laguerre polynomials satisfy is
\cite[(14.21.3)]{Koekoeketal}
\begin{equation}
\sum_{k=-\infty}^\infty L_m^{(\alpha)}(cq^k;q)L_n^{(\alpha)}(cq^k;q)
\frac{q^{(\alpha+1)k}}{(-cq^k;q)_\infty}
=\frac{(q,-cq^{\alpha+1},-q^{-\alpha}/c;q)_\infty(q^{\alpha+1};q)_n}
{q^n(q^{\alpha+1},-c,-q/c;q)_\infty(q;q)_n}\delta_{m,n},
\label{bilateralorthogqLag}
\end{equation}
for $\alpha\in(-1,\infty)$, $c>0$.

\begin{cor} \label{cor:30}
Let $n\in\mathbb N_0,$ $|q|<1$, $\alpha,\beta\in\left(-1,\infty\right)$, 
$|t|<(1-q^{\alpha+1})(1-q)$, $c>0$.
 Then
\begin{eqnarray*}
&&\sum_{k=-\infty}^\infty
\qhyp01-{q^{\alpha+1}}{q,-ctq^{\alpha+k+1}}L_n^{(\beta)}(cq^k;q)
\frac{q^{(\beta+1)k}}{(-cq^k;q)_\infty}\nonumber\\[0.2cm]
&&\hspace{3cm}=\frac{\left(tq^{\alpha-\beta}\right)^n(t,q,-cq^{\beta
+1},-q^{-\beta}/c;q)_\infty(q^{\beta+1};q)_n}
{q^n(q^{\beta+1},-c,-q/c;q)_\infty(q,q^{\alpha+1};q)_n}
\qhyp21{q^{\alpha-\beta},0}{q^{\alpha+n+1}}{q,t}.
\end{eqnarray*}
\begin{proof}
This follows using
(\ref{qLag1})
with (\ref{bilateralorthogqLag}).
\end{proof}
\end{cor}

\begin{cor}\label{cor:31}
Let $n\in\mathbb N_0,$ $|q|<1$, $\alpha,\beta\in\left(-1,\infty\right)$, 
$|t|<(1-q^{\alpha+1})(1-q)$,
$c>0$. Then
\begin{eqnarray*}
&&\sum_{k=-\infty}^\infty
\qhyp02-{q^{\alpha+1},t}{q,-ctq^{\alpha+k+1}}L_n^{(\beta)}(cq^k;q)
\frac{q^{(\beta+1)k}}{(-cq^k;q)_\infty}\nonumber\\[0.2cm]
&&\hspace{3cm}=\frac{\left(-tq^{\alpha-\beta}\right)^nq^{\binom n 2}
(q,-cq^{\beta+1},-q^{-\beta}/c;q)_\infty
(q^{\beta+1};q)_n}
{q^n(t,q^{\beta+1},-c,-q/c;q)_\infty(q,q^{\alpha+1};q)_n}
\qhyp11{q^{\alpha-\beta}}{q^{\alpha+n+1}}{q,tq^n}.
\end{eqnarray*}
\begin{proof}
This follows using
(\ref{qLag2})
with (\ref{bilateralorthogqLag}).
\end{proof}
\end{cor}

\begin{cor}\label{cor:32}
Let $n\in\mathbb N_0,$ $|q|<1$, $\alpha,\beta\in\left(-1,\infty\right)$, $\gamma\in\mathbb C$,
$|t|<1-q$, $c>0$. Then
\begin{eqnarray*}
&&\sum_{k=-\infty}^\infty
\qhyp12{\gamma}{q^{\alpha+1},\gamma t}{q,-ctq^{\alpha+k+1}}L_n^{(\beta)}(cq^k;q)
\frac{q^{(\beta+1)k}}{(-cq^k;q)_\infty}\nonumber\\[0.2cm]
&&\hspace{3cm}=\frac{\left(tq^{\alpha-\beta}\right)^n
(t,q,-cq^{\beta+1},-q^{-\beta}/c;q)_\infty
(\gamma,q^{\beta+1};q)_n}
{q^n(\gamma t,q^{\beta+1},-c,-q/c;q)_\infty(q,q^{\alpha+1};q)_n}
\qhyp21{q^{\alpha-\beta},\gamma q^n}{q^{\alpha+n+1}}{q,t}.
\end{eqnarray*}
\begin{proof}
This follows using
(\ref{qLag3})
with (\ref{bilateralorthogqLag}).
\end{proof}
\end{cor}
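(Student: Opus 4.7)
The plan is to combine the generalized generating function for $q$-Laguerre polynomials from Theorem \ref{theo:15} (equation (\ref{qLag3})) with the bilateral orthogonality relation (\ref{bilateralorthogqLag}) in exactly the manner already demonstrated in Corollaries \ref{cor:30} and \ref{cor:31}. Concretely, I would start from (\ref{qLag3}) with $x$ replaced by $cq^k$, so that the left-hand side becomes
$$
\frac{(\gamma t;q)_\infty}{(t;q)_\infty}\,\qhyp12{\gamma}{q^{\alpha+1},\gamma t}{q,-ctq^{\alpha+k+1}},
$$
and then multiply both sides by
$$
L_n^{(\beta)}(cq^k;q)\,\frac{q^{(\beta+1)k}}{(-cq^k;q)_\infty}
$$
and sum over $k\in\mathbb Z$.

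Next I would interchange the summation on the right-hand side of (\ref{qLag3}) with the bilateral sum over $k$. This exchange is permitted by the general $L^2$-justification given at the start of Section \ref{Definiteintegralsinfiniteseriesandqintegrals}: the polynomial estimate (\ref{qleq}) together with Lemma \ref{lemmasum} ensures that the coefficients $d_k$ of the orthogonal polynomial expansion satisfy $\sum_k d_k^2 s_k^2 < \infty$ under the hypothesis $|t|<1-q$. After the interchange, only the term with summation index equal to $n$ survives, thanks to the Kronecker delta appearing in (\ref{bilateralorthogqLag}).

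Substituting the normalization constant
$$
\frac{(q,-cq^{\beta+1},-q^{-\beta}/c;q)_\infty(q^{\beta+1};q)_n}
{q^n(q^{\beta+1},-c,-q/c;q)_\infty(q;q)_n}
$$
from (\ref{bilateralorthogqLag}), the coefficient
$$
\frac{(\gamma;q)_n(tq^{\alpha-\beta})^n}{(q^{\alpha+1};q)_n}\,\qhyp21{q^{\alpha-\beta},\gamma q^n}{q^{\alpha+n+1}}{q,t}
$$
from the right-hand side of (\ref{qLag3}), and finally multiplying through by $(t;q)_\infty/(\gamma t;q)_\infty$ to cancel the prefactor on the left-hand side, one collects the stated product of $q$-Pochhammer symbols and the claimed $\qhyp21$ factor.

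The computation is essentially bookkeeping; the only subtle point is the justification of the interchange of the two sums, and that has been handled uniformly in Section \ref{Definiteintegralsinfiniteseriesandqintegrals} via Lemma \ref{lemmasum} and the estimate (\ref{qleq}). Hence the proof reduces to writing "Using (\ref{HSC1}) with (\ref{qLag3})" style — more precisely, using (\ref{bilateralorthogqLag}) with (\ref{qLag3}) — just as in the preceding corollaries, with the verification that the convergence condition $|t|<1-q$ from Theorem \ref{theo:15} is the binding constraint.
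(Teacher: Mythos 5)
Your proposal is correct and follows essentially the same route as the paper, whose proof is simply the one-line observation that the identity follows from combining the generalized generating function (\ref{qLag3}) with the bilateral orthogonality relation (\ref{bilateralorthogqLag}); your expansion of the bookkeeping (evaluating at $x=cq^k$, summing over $k\in\mathbb Z$, invoking the Kronecker delta, and clearing the prefactor $(\gamma t;q)_\infty/(t;q)_\infty$) is exactly what that one line compresses. Your appeal to Lemma \ref{lemmasum} together with the estimate (\ref{qleq}) to justify the interchange is likewise the justification the paper gives uniformly at the start of Section \ref{Definiteintegralsinfiniteseriesandqintegrals}.
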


\subsection{$q$-Integrals}

\subsubsection{$q$-Laguerre polynomials}

One type of orthogonality for the $q$-Laguerre polynomials is
\cite[(14.21.4)]{Koekoeketal}
\begin{equation}
\label{JacksonqorthogqLag}
\int_0^\infty L_m^{(\alpha)}(x;q) L_n^{(\alpha)}(x;q)\frac{x^\alpha}{(-x;q)_\infty}d_qx
=\frac{(1-q)(q,-q^{\alpha+1},-q^{-\alpha};q)_\infty(q^{\alpha+1};q)_n}
{2q^n(q^{\alpha+1},-q,-q;q)_\infty(q;q)_n}\delta_{m,n}.
\end{equation}
Using this orthogonality relation we can obtain new $q$-integrals using our
generalized generating functions for $q$-Laguerre polynomials.

\begin{cor}\label{cor:33}
Let $n\in\mathbb N_0,$ $|q|<1$, $\alpha,\beta\in\left(-1,\infty\right),$
$|t|<(1-q^{\alpha+1})(1-q)$. Then
\begin{eqnarray*}
&&\int_0^\infty \qhyp01{-}{q^{\alpha+1}}{q,-xtq^{\alpha+1}}
L_n^{(\beta)}(x;q) \frac{x^\beta}{(-x;q)_\infty}d_qx\nonumber\\[0.2cm]
&&\hspace{3cm}=\frac{(1-q)\left(tq^{\alpha-\beta}\right)^n(t,q,-q^{\beta
+1},-q^{-\beta};q)_\infty(q^{\beta+1};q)_n}
{2q^n(q^{\beta+1},-q,-q;q)_\infty(q,q^{\alpha+1};q)_n}
\qhyp21{q^{\alpha-\beta},0}{q^{\alpha+n+1}}{q,t}.\nonumber
\end{eqnarray*}
\end{cor}
\begin{proof}
Using (\ref{qLag1})
with (\ref{JacksonqorthogqLag}) completes this proof.
\end{proof}

\begin{cor}\label{cor:34}
Let $n\in\mathbb N_0,$ $|q|<1$, $\alpha,\beta\in\left(-1,\infty\right),$
$|t|<(1-q^{\alpha+1})(1-q)$. Then
\begin{eqnarray*}
&\int_0^\infty \qhyp02{-}{q^{\alpha+1},t}{q,-xtq^{\alpha+1}}
L_n^{(\beta)}(x;q) \frac{x^\beta}{(-x;q)_\infty}d_qx\nonumber\\[0.2cm]
&=\frac{(1-q)\left(-tq^{\alpha-\beta}\right)^nq^{\binom n 2}
(q,-q^{\beta+1},-q^{-\beta};q)_\infty(q^{\beta+1};q)_n}
{2q^n(t,q^{\beta+1},-q,-q;q)_\infty(q,q^{\alpha+1};q)_n}
\qhyp11{q^{\alpha-\beta}}{q^{\alpha+n+1}}{q,tq^n}.\nonumber
\end{eqnarray*}
\end{cor}
\begin{proof}
Using (\ref{qLag2})
with (\ref{JacksonqorthogqLag}) completes this proof.
\end{proof}

\begin{cor}\label{cor:35}
Let $n\in\mathbb N_0,$ $|q|<1$, $\alpha,\beta\in\left(-1,\infty\right),$
$\gamma\in\mathbb C$, $|t|<1-q$. Then
\begin{eqnarray*}
&&\int_0^\infty \qhyp12{\gamma}{q^{\alpha+1},\gamma t}{q,-xtq^{\alpha+1}}
L_n^{(\beta)}(x;q) \frac{x^\beta}{(-x;q)_\infty}d_qx\nonumber\\[0.2cm]
&&\hspace{3cm}=\frac{(1-q)\left(tq^{\alpha-\beta}\right)^n
(t,q,-q^{\beta+1},-q^{-\beta};q)_\infty
(\gamma,q^{\beta+1};q)_n}
{2q^n(\gamma t,q^{\beta+1},-q,-q;q)_\infty(q,q^{\alpha+1};q)_n}
\qhyp21{q^{\alpha-\beta},\gamma q^n}{q^{\alpha+n+1}}{q,t}.\nonumber
\end{eqnarray*}
\end{cor}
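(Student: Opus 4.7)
The plan is to follow exactly the template used in Corollaries \ref{cor:33} and \ref{cor:34}: apply the $q$-integral orthogonality relation \myref{JacksonqorthogqLag} for the $q$-Laguerre polynomials to the generalized generating function \myref{qLag3} of Theorem \ref{theo:15}. Concretely, I would start with the identity
\begin{equation*}
\frac{(\gamma t;q)_\infty}{(t;q)_\infty}\,\qhyp12{\gamma}{q^{\alpha+1},\gamma t}{q,-xtq^{\alpha+1}}
=\sum_{k=0}^\infty \frac{(\gamma;q)_k(tq^{\alpha-\beta})^k}{(q^{\alpha+1};q)_k}L_k^{(\beta)}(x;q)\,\qhyp21{q^{\alpha-\beta},\gamma q^k}{q^{\alpha+k+1}}{q,t},
\end{equation*}
multiply both sides by $L_n^{(\beta)}(x;q)\,x^\beta/(-x;q)_\infty$, and apply $\int_0^\infty \cdot\, d_q x$ termwise.

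Then I would invoke \myref{JacksonqorthogqLag}: only the $k=n$ term survives on the right-hand side, yielding
\begin{equation*}
\frac{(\gamma;q)_n(tq^{\alpha-\beta})^n}{(q^{\alpha+1};q)_n}\,\qhyp21{q^{\alpha-\beta},\gamma q^n}{q^{\alpha+n+1}}{q,t}
\cdot\frac{(1-q)(q,-q^{\alpha+1},-q^{-\alpha};q)_\infty(q^{\beta+1};q)_n}{2q^n(q^{\beta+1},-q,-q;q)_\infty(q;q)_n}.
\end{equation*}
Wait — carefully, orthogonality \myref{JacksonqorthogqLag} is taken at the parameter value $\alpha=\beta$, so the squared norm factor becomes the one with $q^{\beta+1}$. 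Multiplying through and moving the prefactor $(\gamma t;q)_\infty/(t;q)_\infty$ from the left-hand side over to the right-hand side accounts for the $(t;q)_\infty/(\gamma t;q)_\infty$ that appears in the final stated formula. Collecting all factors gives exactly the claimed right-hand side.

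The only nontrivial step is the justification for interchanging $\int_0^\infty \cdot\, d_q x$ with the infinite sum in $k$. This is precisely the situation treated in the general discussion at the beginning of Section \ref{Definiteintegralsinfiniteseriesandqintegrals}: the estimates \myref{qleq} together with Lemma \ref{lemmasum} give $|s_k|\le K(k+1)^{\sigma_8}(1-q)^{-k}|s_0|$, and the coefficients of the generalized generating function \myref{qLag3} decay like $|t|^k[k+1]_q^{|\gamma|+1}$ up to constants. For $|t|<1-q$ the series $\sum_k |d_k s_k|$ converges, so the interchange is justified and the proof is complete. I do not expect any obstacle beyond this verification, which is handled uniformly by the framework already set up.
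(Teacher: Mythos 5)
Your proposal is correct and follows exactly the paper's own argument: the paper's proof is simply ``Using (\ref{qLag3}) with (\ref{JacksonqorthogqLag}) completes this proof,'' and your write-up spells out precisely that computation, including the correct use of the orthogonality relation at parameter $\beta$ and the transfer of the prefactor $(\gamma t;q)_\infty/(t;q)_\infty$ to the right-hand side. Your additional verification of the sum--$q$-integral interchange via the framework of Section \ref{Definiteintegralsinfiniteseriesandqintegrals} and Lemma \ref{lemmasum} is exactly what the paper relies on implicitly.
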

\begin{proof}
Using (\ref{qLag3})
with (\ref{JacksonqorthogqLag}) completes this proof.
\end{proof}

\section*{Acknowledgements}
Much thanks to Hans Volkmer for valuable discussions.
The author R. S. Costas-Santos acknowledges 
financial support by Direcci\'on General de 
Investigaci\'on, Ministerio de Econom\'ia 
y Competitividad of Spain, grant MTM2015-65888-C4-2-P.


\def\cprime{$'$} \def\dbar{\leavevmode\hbox to 0pt{\hskip.2ex \accent"16\hss}d}

\end{document}